\documentclass[preprint,12pt]{elsarticle}

\usepackage{amsthm}
\usepackage{lineno}
\usepackage{latexsym,amscd,amsfonts,enumerate,supertabular}
\usepackage{tabularx}
\usepackage{array}
\usepackage{bigstrut}
\usepackage{graphicx}
\usepackage{epsfig}
\usepackage{amssymb}
\usepackage[intlimits]{amsmath}
\usepackage{mathrsfs}
\usepackage{pst-plot}
\usepackage{bm}

\usepackage{ifpdf}
\usepackage{color}
\usepackage{multicol}
\usepackage{multirow}
\usepackage{hhline}
\usepackage{makecell}
\usepackage{xcolor}
\usepackage{float}
\usepackage{subfigure}
\usepackage{mathabx}

\usepackage{makecell,bigstrut}
\usepackage{pstricks,pst-node,pst-tree,tikz}
\usepackage{soul}

\tikzset{
treenode/.style = {inner sep=0.2pt, text centered, font=\rmfamily},
arn_b/.style = {treenode, circle, white, draw=black, fill=black, text width=1.06mm},
arn_w/.style = {treenode, circle, black, draw=black, text width=1.06mm},
w/.style = {treenode, circle, black, draw=black, text width=1.3mm},
textnode/.style = {text width=2mm},
bigcirc_w/.style = {draw, circle, black, draw=black, minimum size=2.2cm, text width=1.9mm}
}



\def\tdOne{\begin{tikzpicture}[-, grow=up, level 1/.style={sibling distance=5mm,level distance=4mm}]
\node [arn_b] {};
\end{tikzpicture}}

\def\tdTwoOne{\begin{tikzpicture}[-, grow=up, level 1/.style={sibling distance=5mm,level distance=4mm}]
\node [arn_b] {} child{ node [arn_w]{} };
\end{tikzpicture}}

\def\tdThreeOne{\begin{tikzpicture}[-, grow=up, level 1/.style={sibling distance=5mm,level distance=4mm}]
\node [arn_b] {} child{ node [arn_w]{} child{ node [arn_b] {}}};
\end{tikzpicture}}

\def\tdFourOne{\begin{tikzpicture}[-, grow=up, level 1/.style={sibling distance=5mm,level distance=4mm}]
\node [arn_b] {} child{ node [arn_w]{} child{node [arn_b]{} child{node [arn_w] {}}}};p
\end{tikzpicture}}

\def\tdFourTwo{\begin{tikzpicture}[-, grow=up, level 1/.style={sibling distance=5mm,level distance=4mm}]
\node [arn_b] {} child{ node [arn_w]{} child{ node [arn_b] {}} child{ node [arn_b] {}}};
\end{tikzpicture}}
\def\tdFiveOne{\begin{tikzpicture}[-, grow=up, level 1/.style={sibling distance=5mm,level distance=4mm}]
\node [arn_b] {} child{ node [arn_w]{} child{ node [arn_b]{} child{node [arn_w]{} child{node [arn_b] {}}}}};
\end{tikzpicture}}

\def\tdFiveTwo{\begin{tikzpicture}[-, grow=up, level 1/.style={sibling distance=5mm,level distance=4mm}]
\node [arn_b] {} child{ node [arn_w]{} child{ node [arn_b] {} child{ node [arn_w] {}}} child{ node [arn_b] {} }};
\end{tikzpicture}}

\def\tdFiveThree{\begin{tikzpicture}[-, grow=up, level 1/.style={sibling distance=5mm,level distance=4mm}]
\node [arn_b] {} child{ node [arn_w]{} child{ node [arn_b] {}} child{ node [arn_b] {}} child{ node [arn_b] {}}};
\end{tikzpicture}}

\def\tdSixOne{\begin{tikzpicture}[-, grow=up, level 1/.style={sibling distance=5mm,level distance=4mm}]
\node [arn_b] {} child{ node [arn_w]{} child{ node [arn_b]{} child{ node [arn_w]{} child{node [arn_b]{} child{node [arn_w] {}}}}}};
\end{tikzpicture}}

\def\tdSixTwo{\begin{tikzpicture}[-, grow=up, level 1/.style={sibling distance=5mm,level distance=4mm}]
\node [arn_b] {} child{ node [arn_w]{} child{node [arn_b]{} child{node [arn_w] {} child{ node [arn_b] {}} child{ node [arn_b] {}}}}};
\end{tikzpicture}}

\def\tdSixThree{\begin{tikzpicture}[-, grow=up, level 1/.style={sibling distance=5mm,level distance=4mm}]
\node [arn_b] {} child{ node [arn_w]{} child{ node [arn_b] {} child{ node [arn_w] {} child{ node [arn_b] {}}}}child{ node [arn_b] {}}};
\end{tikzpicture}}

\def\tdSixFour{\begin{tikzpicture}[-, grow=up, level 1/.style={sibling distance=5mm,level distance=4mm}]
\node [arn_b] {} child{ node [arn_w]{} child{ node [arn_b] {} child{ node [arn_w] {}}} child{ node [arn_b] {} child{ node [arn_w] {}}}};
\end{tikzpicture}}

\def\tdSixFive{\begin{tikzpicture}[-, grow=up, level 1/.style={sibling distance=5mm,level distance=4mm}]
\node [arn_b] {} child{ node [arn_w]{} child{ node [arn_b] {} child{ node [arn_w] {}}} child{ node [arn_b] {}} child{ node [arn_b] {} } };
\end{tikzpicture}}

\def\tdSixSix{\begin{tikzpicture}[-, grow=up, level 1/.style={sibling distance=5mm,level distance=4mm}]
\node [arn_b] {} child{ node [arn_w]{} child{ node [arn_b] {}} child{ node [arn_b] {}} child{ node [arn_b] {}} child{ node [arn_b] {}}};
\end{tikzpicture}}






\definecolor{darkmagenta}{rgb}{0.55, 0.0, 0.55}


\ifpdf
  \DeclareGraphicsExtensions{.pdf,.png,.jpg}
\else
  \DeclareGraphicsExtensions{.eps}
\fi

\numberwithin{equation}{section}
\newtheorem{mydef}{Definition}[section]
\newtheorem{mythm}{Theorem}[section]

\newtheorem{mylem}{Lemma}[section]
\newtheorem{myrem}{Remark}[section]
\newtheorem{mycor}{Corollary}[section]


\begin{document}
\begin{frontmatter}
\title{{\bf
Exponentially fitted two-derivative DIRK methods for oscillatory differential equations
}}
 \tnotetext[label2]{The first author was supported by 2019 EMS-Simons research visit (type A1) grant. The second author is partially supported by NSF grant DMS--2012022. The fourth author was partially supported by National Natural Science Foundation of China (No. 11171155, No. 11871268) and  Natural Science Foundation of Jiangsu Province, China (No. BK20171370).}

\author[ul]{Julius O. Ehigie\corref{cor1}}
 \ead{jehigie@unilag.edu.ng}

\author[msu]{Vu Thai Luan}
 \ead{luan@math.msstate.edu}

\author[ul]{Solomon A. Okunuga}
\ead{sokunuga@unilag.edu.ng}

\author[nau]{Xiong You}
\ead{youx@njau.edu.cn}

\cortext[cor1]{Corresponding author}
\address[ul]{Department of Mathematics, University of Lagos, 23401, Nigeria}

\address[msu]{Department of Mathematics and Statistics, Mississippi State University, Mississippi State, MS, 39762, USA}

\address[nau]{Department of Applied Mathematics, Nanjing Agricultural University, Nanjing 210095, China}

\begin{abstract}
\small
In this work, we construct and derive a new class of exponentially fitted two-derivative diagonally implicit Runge--Kutta (EFTDDIRK) methods for the numerical solution of differential equations with oscillatory solutions. First, a general format of so-called  modified two-derivative diagonally implicit Runge--Kutta methods (TDDIRK) is proposed. Their order conditions up to order six  are derived by introducing a set of bi-coloured rooted trees and deriving new elementary weights. Next, we build exponential fitting conditions in order for these modified TDDIRK methods to treat oscillatory solutions, leading to EFTDDIRK methods. In particular,  a family of 2-stage fourth-order, a fifth-order, and a 3-stage sixth-order EFTDDIRK schemes are derived. These can be considered as superconvergent methods. The stability and phase-lag analysis of the new methods are also investigated, leading to optimized fourth-order schemes, which turn out to be much more accurate and efficient than their non-optimized versions.
 Finally, we carry out numerical experiments on some oscillatory test problems. Our numerical results clearly demonstrate the accuracy and efficiency of  the newly derived methods when compared with existing trigonometically/exponentially fitted implicit Runge--Kutta methods and two-derivative Runge--Kutta methods of the same order in the literature.
\small
\end{abstract}

\begin{keyword}
\small
Exponential/trigonometrical fitting
\sep Two-derivative Runge--Kutta methods
\sep Diagonally implicit methods
\sep Oscillatory differential equations.

\end{keyword}
\end{frontmatter}

\section{Introduction}
Oscillatory differential equations have often been used to model  oscillatory phenomena in various fields of applied sciences such as celestial mechanics \cite{Arnold1989}, molecular dynamics \cite{GriebelKnapekZumbusch2007}, quantum chemistry \cite{RossiBudroniMarchettiniCutiettaRusticiLiveri2009} and regulatory genomics \cite{ElowitzLeibler2000}, to mention a few.
This class of differential equations can be formulated as initial value problems of the general form
\begin{equation}\label{IVP}
   y'(x)=f(y(x)), \quad y(x_0)=y_0.
\end{equation}
Simulating such an oscillatory system \eqref{IVP} is not an easy task since it usually involves periodic or oscillating solutions. An extensive discussion on numerical analysis of  \eqref{IVP} including oscillatory systems can be found in \cite{HairerLubichWanner2006}. In particular, classical methods such as explicit Runge--Kutta schemes have their limitations (e.g., the lack of stability) in capturing the right behavior of oscillatory solutions. This forces them to use tiny time steps and thereby is inefficient. In this regard, standard methods such as implicit Runge--Kutta methods \cite{HW96} or recent advanced methods such as exponential integrators (e.g., see \cite{HO10,LuanMichels2020,Luan2020,Luan2017,LuanOstermann2016,LuanOstermann2013})
are preferable  since they may offer A-stable property or much larger stability regions, meaning less restriction to stepsizes. Here, we focus on diagonally implicit Runge--Kutta (DIRK) methods (or sometimes referred to as semi-implicit Runge--Kutta methods)  \cite{KennedyCarpenter2016}, among others, which have shown to be very attractive \cite{Butcher2016} over fully implicit methods in terms of computational efficiency. They were also designed for differential systems with eigenvalues on the imaginary axis where the phase errors (dispersion) and the numerical damping (dissipation) of the free oscillations in the numerical solution are small, see \cite{Franco2003}.

Motivated by the work of \cite{KastlungerWanner1972} which utilizes higher order derivatives to derive superconvergent (implicit) Runge--Kutta schemes, a class of two-derivative Runge--Kutta (TDRK) methods for  solving \eqref{IVP} was proposed in \cite{ChanTsai2010,TsaiChanWang2014}. An advantage of these methods is that the number of algebraic order conditions is significantly reduced in comparison with the classical Runge--Kutta methods of the same order, thereby allowing the construction of high-order schemes with only a few stages (see also \cite{GoekenJohnson2000, WuXia2006,AkanbiOkunugaSofoluwa2012,WusuAkanbiOkunuga2013,SealGucluChristlieb2014,TuraciOzis2017}). Following this, implicit TDRK methods were derived in \cite{ChanWangTsai2012,AhmadSenuIbrahim2019}, general linear TDRK  methods were presented in \cite{AbdiBrasHojjati2014,AbdiHojjati2011b,AbdiHojjati2015,ButcherHojjati2005}, and recently TDRK methods with optimal phase properties were constructed in \cite{FangYou2014,KalogiratouMonovasilisSimos2019,Krivovichev2020}.
In the case if a good estimate of frequency is known in advance, one can further improve the numerical solution of these methods by incorporating the exponential fitting idea  \cite{Gautshci1961,Bettis1979,Ixaru1984,Coleman1989} in order to integrate exactly the system who solutions are linear combinations of ${\rm e}^{\pm{\rm i} \omega x}$ ($\omega$ is a prescribed frequency). This leads to  exponentially/trigonometrically fitted Runge-Kutta (EFRK) methods  whose coefficients are non-constant coefficients involving $\omega$ \cite{Simos1998,BergheMeyerVanDaeleHecke1999,BergheMeyerVanDaeleHecke2000}. Exponentially fitted symmetric and symplectic implicit Runge-Kutta method were derived in \cite{CalvoFrancoMontijanoRandez2008,CalvoFrancoMontijanoRandez2009,EhigieDiaoZhangFangHouYou2017}. An extensive survey of exponentially fitted methods can be found in \cite{Paternoster2012,WuYouWang2013} and the references therein. We also note that trigometrically/exponentially fitted two-derivative Runge-Kutta methods (EFTDRK) methods with optimal phase properties were derived in \cite{FangYouMing2014,ChenLiZhangYou2015,AhmadSenuIbrahimOthman2019}.

In this paper, we combine the idea of DIRK and EFTDRK methods to construct a new class of exponentially fitted two-derivative diagonally implicit Runge--Kutta (EFTDDIRK) methods up to order six. In contrast to the previous work, we directly formulate the general format of these methods specially for oscillatory systems (called modified two-derivative DIRK schemes) by allowing coefficients to be functions of a complex variable $\mu={\rm i}\omega h$ (${\rm i}^2=-1$) involving the frequency $\omega$  and the stepsize $h$. We then use a set of bi-coloured rooted trees to represent the expansion of the exact and the numerical solution, and thus derive new elementary weights. This allows us to easily derive the order conditions for high-order methods. Next, using the idea of EFTDRK approach, we obtain some preliminary results  leading to new exponential fitting conditions of high-order. With these in place, we were able to construct EFTDDIRK methods of high-order using only few stages. In particular, with $s=2$ stages we obtain superconvergent schemes of order $p=2s$ and $p=2s+1$, and with $s=3$ stages, a method of order $p=2s$ is derived. This is a significant improvement since previous  EFTDRK methods (either explicit or implicit) \cite{ChanTsai2010, KalogiratouMonovasilisSimos2019,AhmadSenuIbrahim2019,ChanWangTsai2012} only achieve orders $p=2s$ for  $s=2$ stages and $p=2s-1$ for  $s=3$ stages.

The outline of the paper is as follows. In Section~\ref{NumericalMethod}, we introduce the modified two-derivative diagonally implicit Runge--Kutta method (TDDIRK) for solving \eqref{IVP} and derive their order conditions (Lemma~\ref{OrderProposition} and Theorem~\ref{ClassicalOrderConditions}). In Section~\ref{ExponentialCondition}, we present a theoretical framework to establish  exponential fitting conditions for these modified TDDIRK methods (Theorem~\ref{theorem3.1}). This gives rise to EFTDDIRK methods. With the classical order conditions and exponential fitting conditions in hand, we are able to derive EFTDDIRK schemes of orders up to six in Section~\ref{Construct}. Our main results in this section are the new schemes  $\mathtt{EFTDDIRK2s4(c_1,c_2,\phi)}$ (a family of 2-stage fourth-order methods), $\mathtt{EFTDDIRK2s5}$ (a 2-stage fifth-order method), and $\mathtt{EFTDDIRK3s6}$ (a 3-stage   sixth-order method).
The stability and phase properties of these new methods are studied in Section~\ref{StabilityPhase}, in which the new fourth-order schemes are optimized for their accuracy. A technique for the estimation of frequency needed for the implementation of  EFTDDIRK methods is discussed in~Section \ref{FreqDetermine}. Section~\ref{experiments} devotes to the numerical experiments, in which  the effectiveness of the new EFTDDIRK methods is demonstrated. Finally, we give some concluding remarks in Section~\ref{conclu}.

\section{Numerical method}\label{NumericalMethod}

In this section,  we introduce so-called \emph{modified two-derivative diagonally implicit Runge--Kutta methods} (TDDIRK) for solving \eqref{IVP} and derive their order conditions. Our idea was motivated by \cite{ChenLiZhangYou2015}, in which the modified explicit two-derivative Runge--Kutta (TDRK) methods were introduced.
\subsection{Modified two-derivative diagonally implicit Runge--Kutta methods}\label{subsec2.1}

For the numerical solution of oscillatory problems \eqref{IVP}, we define an $s$-stage modified TDDIRK method  by the following formulation:
\begin{subequations}\label{modTDDIRK}
\begin{align}
Y_i &= y_n+\xi_i(\mu)c_{i} h f(y_n)+h^2\sum\limits_{j=1}^{i} a_{ij}(\mu)g(Y_j),\quad i=1,\ldots,s,\label{InternalStages}\\
 y_{n+1} &= y_n+h f(y_n)+h^2\sum\limits_{i=1}^s b_i(\mu)g(Y_i).\label{Updates}
\end{align}
\end{subequations}
Here, $h$ is the stepsize, $g(y)=y''=f'(y)f(y)$, and $\mu={\rm i}\omega h$, where $\omega>0$  is an accurate estimate of the
principal frequency of the problem.
The coefficients
$\xi_i(\mu), b_i(\mu)$, and $a_{ij}(\mu)$  are
assumed to be complex differentiable  on their domains. Furthermore,  they are also supposed to be even functions of
$\mu$ (this will be justified later in Section~\ref{ExponentialCondition}, see Lemma~\ref{lemma3.2}).

Note that the coefficients in \eqref{modTDDIRK} can be represented in a Butcher's type tableau with $\bm{c}=(c_1,\ldots,c_s)^T$, $\bm{\xi}(\mu)=(\xi_1(\mu),\ldots, \xi_s(\mu))^T$, $\bm{b}(\mu)=(b_1(\mu),\ldots,b_s(\mu))^T$, and $\bm{A}(\mu)=[a_{ij}(\mu)]_{i,j=1}^s$.

\subsection{Local error and order conditions}

To derive general order conditions for the proposed scheme \eqref{modTDDIRK}, we consider  $y_{n+1}$ as the numerical solution at $x_n +h$ obtained after one step starting from $y(x_n)$ with the local assumption that $y(x_n)=y_n$. Similar to the derivation of the order conditions for Runge--Kutta methods, the idea is then to compare the series expansion of  $y_{n+1}$ with that of the exact solution  $y(x_n+h)$  (in terms of the stepsize $h$) to study the local error
\begin{equation}\label{LTE:def}
    LTE_{n}=y(x_n+h)-y_{n+1}.
\end{equation}
For a sufficient smooth solution $y(x)$, the modified TDDIRK method ~\eqref{modTDDIRK} is said to be of order $p$ (consistency) if the local truncation error of the
solution satisfies
\begin{equation}\label{LTE}
    LTE_{n}=\mathcal{O}(h^{p+1}) \quad \text{as} \quad
    h\rightarrow 0.
\end{equation}
For deriving schemes of high order, it turns out that the expansion of the numerical and exact solution become much more complicated. In this regard, the rooted tree theory significantly simplifies the derivation of order conditions (e.g., see \cite{Butcher2016,HairerNorsettWanner1993} for classical Runge-Kutta methods and see \cite{ChenLiZhangYou2015} for modified TDRK methods). Here, we adapt this approach  to the proposed modified TDDIRK methods.

\subsubsection{Expansion of the exact solution}

Assume that $f(y)$ is sufficiently differentiable (thus $g(y)$), one can then expand $y(x_n+h)$ using the Taylor series about $x_n$ to obtain
\begin{equation}\label{ExpandExact}
y(x_n+h)=y_n+hf+\dfrac{h^2}{2!}g+\dfrac{h^3}{3!}g'f+\dfrac{h^4}{4!}\big(g'g+g''(f,f)\big)+\ldots
\end{equation}
where $f$ and $g$ and their derivatives are the short abbreviations representing that they are  evaluated at $y_n$. Generalizing  \eqref{ExpandExact} in terms of rooted trees, one can show that
\begin{equation}\label{TreeExact}
y(x_n+h)=y_n+\sum\limits_{\tau\in\mathcal{BT}}\dfrac{h^{\rho(\tau)}}{\rho(\tau)!}\alpha(\tau)\mathcal{F}(\tau)(y_n)
\end{equation}
where $\mathcal{BT}$ is the set of bi-coloured trees
$$
\mathcal{BT}=\{\tdOne, \tdTwoOne, \tdThreeOne, \tdFourOne, \tdFourTwo, \ldots \},
$$
associated with the recursively generated elementary differentials
$\mathcal{F}(\tau)(y_n)=\{f,g,g'f,g'g,g''(f,f),\ldots\}$, $\tau\in \mathcal{BT}$, $\rho(\tau)$ is the order of the tree $\tau$ (the number of vertices of $\tau$), and $\alpha(\tau)$ is the number of monotonic labellings of $\tau \in \mathcal{BT}$, as respectively defined in a recursive approach, see  \cite[Appendix]{ChenLiZhangYou2015}:
\[
\rho(\tau) =
\begin{cases}
    1, & \hbox{$\tau=\tdOne$} \\
    2, & \hbox{$\tau=\tdTwoOne$} \\
1+\sum_{i=1}^{m} \rho(\tau_i), & \tau=[\tau_1,\ldots, \tau_m ]_2
\end{cases}
\]
and
\[
\alpha(\tau) =
\begin{cases}
    1, & \hbox{$\tau=\tdOne$} \\
    2, & \hbox{$\tau=\tdTwoOne$} \\
(\rho(\tau) - 2)!  \prod_{i=1}^{k} \frac{1}{n_i !} \Big(\frac{\alpha (\tau_i) }{ \rho(\tau_i)}\Big)^{n_i}, & \tau=[\tau^{n_1}_1,\ldots, \tau^{n_k}_k ]_2.
\end{cases}
\]
Here, the fact that a tree $\tau = [\tau_1,\ldots, \tau_m]_2 \in  \mathcal{BT}$ does not depend on the ordering of its branches which themselves need not be distinct, we suppose it has only $k$ distinct branches among $\tau_1,\ldots, \tau_m $ (say $\tau_1,\ldots, \tau_k$) with their number of occurrences denoted by $n_1,\ldots, n_k$ ($n_1+\ldots +n_k = m$), respectively. Thus, we rewrite such a tree $\tau$ as
$
\tau=[\tau^{n_1}_1,\ldots, \tau^{n_k}_k ]_2.
$

\subsubsection{Expansion of the numerical solution}
First, inserting the internal stages \eqref{InternalStages} into \eqref{Updates}  gives
\begin{equation}\label{combinedModTDDIRK}
y_{n+1}=y_n+hf+h^2 \sum\limits_{i=1}^sb_i(\mu)g\big(y_n+\xi_i(\mu)c_i hf+h^2\sum\limits_{j=1}^{i}a_{ij}(\mu)g(Y_j)\big).
\end{equation}
Then, we expand the function $g$ in \eqref{combinedModTDDIRK} in  Taylor series about $y_n$ to obtain
\begin{equation}\label{ExpandNM}
\begin{aligned}
y_{n+1}=y_n&+ \dfrac{h}{1!}\cdot 1! \cdot f+\dfrac{h^2}{2!}\cdot 2! \cdot 1\cdot \sum\limits_{i=1}^sb_i(\mu)g+\dfrac{h^3}{3!}\cdot 3! \cdot 1 \cdot \sum\limits_{i=1}^s  b_i(\mu)\big(\xi_i(\mu)c_i\big) g'f\\
&+\dfrac{h^4}{4!} \cdot 4! \cdot 1 \cdot \sum\limits_{i=1}^s b_i(\mu)\sum\limits_{j=1}^i  a_{ij}(\mu)g'g\\
&+\dfrac{h^4}{4!} \cdot 12 \cdot 1 \cdot \sum\limits_{i=1}^s b_i(\mu) (\xi_i(\mu)c_i\big)^2g''(f,f)+\ldots
\end{aligned}
\end{equation}
Now using the set of bi-coloured trees $\mathcal{BT}$, we derive a general expansion of  \eqref{ExpandNM} as
\begin{equation}\label{TreeNumerical}
y_{n+1}=y_n+\sum\limits_{\tau\in\mathcal{BT}}\dfrac{h^{\rho(\tau)}}{\rho(\tau)!}\gamma(\tau)\alpha(\tau)\Phi(\tau)\mathcal{F}(\tau)(y_n),
\end{equation}
where
$\rho(\tau)$ and $\alpha(\tau)$ were defined above;
$\gamma(\tau)$ is the density of $\tau$, and  $\Phi(\tau)$ is the elementary weight function (depending on $\bm{A}(\mu)$, $\bm{b}(\mu)$, $\bm{\xi}(\mu)$ and $\bm{c}$), which are recursively defined respectively as follows:
$$
\gamma(\tau)=\left\{
  \begin{array}{ll}
    1, & \hbox{$\tau=\tdOne$} \\
    2, & \hbox{$\tau=\tdTwoOne$} \\
    \rho(\tau)(\rho(\tau)-1)\gamma(\tau_1)\cdots \gamma(\tau_m), & \hbox{$\tau=[\tau_1,\ldots,\tau_m]_{2}$,}
  \end{array}
\right.
$$
and
\begin{equation}\label{eq:Phi}
\Phi(\tau)=\left\{
  \begin{array}{ll}
    \sum\limits_{i=1}^sb_i(\mu), & \hbox{$\tau=\tdTwoOne$} \\
    \sum\limits_{i=1}^sb_i(\mu)\Phi_i(\tau_1)\cdots\Phi_i(\tau_m) & \hbox{$\tau=[\tau_1,\ldots,\tau_m]_{2}$}
  \end{array}
\right.
\end{equation}
with
\begin{equation}\label{eq:Phi:sub}
\Phi_i(\tau)=\left\{
  \begin{array}{ll}
    \xi_i(\mu)c_i, & \hbox{$\tau=\tdOne$} \\[2ex]
    \sum\limits_{j=1}^i a_{ij}(\mu), & \hbox{$\tau=\tdTwoOne$} \\
    \sum\limits_{j=1}^i a_{ij}(\mu)\Phi_j(\tau_1)\cdots\Phi_j(\tau_m), & \hbox{$\tau=[\tau_1,\ldots,\tau_m]_{2}$.}
  \end{array}
\right.
\end{equation}
The elementary weight function $\Phi(\tau)$ defined in \eqref{eq:Phi}--\eqref{eq:Phi:sub} is new and applies to our modified TDDIRK method ~\eqref{modTDDIRK}.  Consider a special case of our method, e.g., when $ \xi_i(\mu) = 1$ and $a_{ij}(\mu) = a_{ij}$ (constant coefficients), we note that using this elementary weight function gives the similar results to the elementary weight function  given in \cite[Appendix]{ChenLiZhangYou2015},  which was defined in a much more complicated way.

\subsubsection{Local error and derivation of the order conditions}

By subtracting the numerical solution \eqref{TreeNumerical} from the exact solution \eqref{TreeExact}, the local truncation error \eqref{LTE:def} takes the form
\begin{equation}\label{LTEGen}
LTE=\sum\limits_{\tau\in\mathcal{BT}}\dfrac{h^{\rho(\tau)}}{\rho(\tau)!}\big( \gamma(\tau)\Phi(\tau) - 1 \big)\alpha(\tau)\mathcal{F}(\tau)(y_n).
\end{equation}
From this, one obtains the following result at once.
\begin{mylem}\label{OrderProposition}
Assuming that the solution of the initial value problem  \eqref{IVP} is sufficient smooth. The modified two-derivative diagonally implicit Runge-Kutta method \eqref{modTDDIRK} has order of consistency $p$ if
\begin{equation}\label{OrderTheorem}
\Phi(\tau)=\dfrac{1}{\gamma(\tau)}+\mathcal{O}\left(h^{p+1-\rho(\tau)} \right) \ \text{for all} \quad \tau \in \mathcal{BT} \ \text{with} \quad \rho(\tau)\leq p.
\end{equation}
\end{mylem}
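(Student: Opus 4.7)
The plan is to read the lemma off directly from the tree expansion of the local truncation error established in \eqref{LTEGen}, treating it as a \emph{finite} Taylor-type remainder argument. Recall that equation \eqref{LTEGen} gives
$$
LTE_n=\sum_{\tau\in\mathcal{BT}}\frac{h^{\rho(\tau)}}{\rho(\tau)!}\bigl(\gamma(\tau)\Phi(\tau)-1\bigr)\alpha(\tau)\mathcal{F}(\tau)(y_n),
$$
and the task is to identify when this quantity is $\mathcal{O}(h^{p+1})$ as $h\to 0$.

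My first step would be to split the sum over $\mathcal{BT}$ into two pieces depending on whether $\rho(\tau)\le p$ or $\rho(\tau)\ge p+1$. For the latter group the explicit prefactor $h^{\rho(\tau)}$ is already at least $h^{p+1}$, so each such term is $\mathcal{O}(h^{p+1})$ provided the remaining factors stay bounded as $h\to 0$. This boundedness is not automatic and is the only ingredient that deserves a careful check: the elementary differentials $\mathcal{F}(\tau)(y_n)$ are bounded on a compact neighbourhood of $y_n$ by the assumed smoothness of $f$ (and hence of $g=f'f$); the combinatorial coefficients $\alpha(\tau),\gamma(\tau)$ are constants; and the elementary weights $\Phi(\tau)$ are bounded near $h=0$ because, by assumption in Section~\ref{subsec2.1}, the coefficients $\xi_i(\mu),\,a_{ij}(\mu),\,b_i(\mu)$ are complex differentiable (in particular continuous) functions of $\mu=\mathrm{i}\omega h$.

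For the remaining (finite) group of trees with $\rho(\tau)\le p$, in order that the corresponding contribution be $\mathcal{O}(h^{p+1})$ it is necessary and sufficient that the scalar factor $\gamma(\tau)\Phi(\tau)-1$ itself be of size $\mathcal{O}(h^{p+1-\rho(\tau)})$, because the accompanying factor $h^{\rho(\tau)}$ contributes exactly a power $h^{\rho(\tau)}\le h^p$. Dividing by $\gamma(\tau)\neq 0$ rearranges this into the stated form
$$
\Phi(\tau)=\frac{1}{\gamma(\tau)}+\mathcal{O}\bigl(h^{p+1-\rho(\tau)}\bigr),\qquad \rho(\tau)\le p,
$$
and summing the finitely many such terms preserves the $\mathcal{O}(h^{p+1})$ estimate. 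Combining the two groups then yields $LTE_n=\mathcal{O}(h^{p+1})$, which by the definition in \eqref{LTE} is exactly consistency of order $p$.

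The only genuine obstacle — already anticipated above — is the convergence/remainder bookkeeping needed to justify truncating the infinite tree series at order $p$. This is handled in the standard way used for classical Runge--Kutta methods (cf.\ \cite{Butcher2016,HairerNorsettWanner1993}) and for the explicit modified TDRK methods of \cite{ChenLiZhangYou2015}: one truncates the Taylor expansions of $g$ in \eqref{combinedModTDDIRK} at a sufficiently high order, gathers the tail into a remainder that is $\mathcal{O}(h^{p+1})$ uniformly in the small parameter $\mu=\mathrm{i}\omega h$, and then identifies the surviving low-order coefficients as the tree sums in \eqref{TreeNumerical}. The $\mu$-dependence of the coefficients of the scheme does not interfere with this step since those coefficients are even, smooth functions of $\mu$ and hence contribute only bounded factors to each finite truncation.
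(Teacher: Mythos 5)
Your proof is correct and follows essentially the same route as the paper, which presents Lemma~\ref{OrderProposition} as an immediate consequence of the local-error expansion \eqref{LTEGen}: the hypothesis makes every term with $\rho(\tau)\le p$ of size $\mathcal{O}(h^{p+1})$, while the higher-order trees already carry the factor $h^{\rho(\tau)}\ge h^{p+1}$. The only difference is that you spell out the truncation/boundedness bookkeeping for the tail of the tree series, which the paper leaves implicit with ``one obtains the following result at once.''
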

Using \eqref{OrderTheorem}, we obtain in Table \ref{Rootedtrees6} the general order conditions (in vector form) for  the modified TDDIRK methods \eqref{modTDDIRK} of order $p$  ($2\leq p\leq 6$), corresponding to the bi-coloured trees, the order trees, and their elementary differentials
(note that, there $\bm{e}=[1,\ldots,1]^T$ and  ``*" denotes the component-wise product).
\setlength{\extrarowheight}{1.5 pt}
\renewcommand{\arraystretch}{1.35}
\begin{table}
\centering
\caption{Elementary differentials and general order conditions for the modified TDDIRK methods \eqref{modTDDIRK} for all  trees $\tau \in \mathcal{BT} \ \text{with} \ 2 \leq \rho(\tau)\leq 6$. }
\label{Rootedtrees6}
\small
\vspace{1.2mm}
\resizebox{\textwidth}{!}{%
\begin{tabular}{|c|c|c|c|c|}
\hline
Tree ($\tau$)& $\rho(\tau)$ &$\mathcal{F}(\tau)(y_n)$ & Order condition(s)& No.\\[2ex]
\hline
\raisebox{-2mm}{\tdTwoOne}&2&$g$&$\bm{b}(\mu)\cdot \bm{e}=\tfrac{1}{2}+\mathcal{O}\big(h^{p-1}\big)$&1\\[2ex]
\hline
\raisebox{-3mm}{\tdThreeOne}&3&$g'f$&$\bm{b}(\mu)\cdot \big(\bm{\xi}(\mu) \text{*} \bm{c}\big)=\tfrac{1}{6}+\mathcal{O}\big(h^{p-2}\big)$&2\\[2ex]
\hline
\raisebox{-3mm}{\tdFourOne}&4&$g'g$&$\bm{b}(\mu)\cdot \big( \bm{A}(\mu)\bm{e}\big)=\tfrac{1}{24}+\mathcal{O}\big(h^{p-3}\big)$&3\\[2ex]
\raisebox{-3mm}{\tdFourTwo}&4&$g''(f,f)$&$\bm{b}(\mu)\cdot \big( \bm{\xi}(\mu) \text{*} \bm{c}\big)^2=\tfrac{1}{12}+\mathcal{O}\big(h^{p-3}\big)$&4\\[2ex]
\hline
\raisebox{-5mm}{\tdFiveOne}&5&$g'g'f$&$\bm{b}(\mu)\cdot \big( \bm{A}(\mu)(\bm{\xi}(\mu)\text{*}\bm{c})\big)=\tfrac{1}{120}+\mathcal{O}\big(h^{p-4}\big)$&5\\[2ex]
\raisebox{-3mm}{\tdFiveTwo}&5&$g''(f,g)$&$\bm{b}(\mu) \cdot \big((\bm{\xi}(\mu) \text{*} \bm{c}) \text{*} (\bm{A}(\mu)\bm{e})\big)=\tfrac{1}{40}+\mathcal{O}\big(h^{p-4}\big)$&6\\[2ex]
\raisebox{-3mm}{\tdFiveThree}&5&$g'''(f,f,f)$&$\bm{b}(\mu)\cdot \big( \bm{\xi}(\mu) \text{*} \bm{c}\big)^3=\tfrac{1}{20}+\mathcal{O}\big(h^{p-4}\big)$&7\\[2ex]
\hline
\raisebox{-5mm}{\tdSixOne}&6&$g'g'g$&$\bm{b}(\mu)\cdot \big( \bm{A}^2(\mu)\bm{e}\big)=\tfrac{1}{720}+\mathcal{O}\big(h^{p-5}\big)$&8\\[2ex]
\raisebox{-5mm}{\tdSixTwo}&6& $g'g''(f,f)$&$\bm{b}(\mu)\cdot \big( \bm{A}(\mu)(\bm{\xi}(\mu) \text{*} \bm{c})^2\big)=\tfrac{1}{360}+\mathcal{O}\big(h^{p-5}\big)$&9\\[2ex]
\raisebox{-5mm}{\tdSixThree} &6& $g''(f,g'f)$&$\bm{b}(\mu) \cdot \big(\big(\bm{\xi}(\mu) \text{*} \bm{c}\big) \text{*} \bm{A}(\mu)(\bm{\xi}(\mu) \text{*} \bm{c})\big)=\tfrac{1}{180}+\mathcal{O}\big(h^{p-5}\big)$&10\\[2ex]
\raisebox{-5mm}{\tdSixFour} &6& $g''(g,g)$&$\bm{b}(\mu)\cdot \big( \bm{A}(\mu)\bm{e}\big)^2=\tfrac{1}{120}+\mathcal{O}\big(h^{p-5}\big)$&11\\[2ex]
\raisebox{-5mm}{\tdSixFive} &6& $g'''(f,f,g)$&$\bm{b}(\mu) \cdot \big(\big(\bm{\xi}(\mu) \text{*} \bm{c}\big)^2 \text{*} (\bm{A}(\mu)\bm{e})\big)=\tfrac{1}{60}+\mathcal{O}\big(h^{p-5}\big)$&12\\[2ex]
\raisebox{-3mm}{\tdSixSix} &6& $g''''(f,f,f,f)$&$\bm{b}(\mu)\cdot \big(\bm{\xi}(\mu) \text{*} \bm{c}\big)^4=\tfrac{1}{30}+\mathcal{O}\big(h^{p-5}\big)$&13\\[2ex]
\hline
\end{tabular}
}
\end{table}

On the other hand, with the assumption that the coefficients $a_{ij}(\mu)$, $b_i(\mu)$ and $\xi_i(\mu)$ of \eqref{modTDDIRK} are sufficiently differentiable and even functions of $\mu={\rm i} \omega h$, they can be expanded as
\begin{subequations}\label{Coeffh}
\begin{align}
a_{ij}(\mu) &
= a_{ij}^{(0)} + a_{ij}^{(2)}\mu^2 + a_{ij}^{(4)}\mu^4  + \ldots 
= a_{ij}^{(0)}-a_{ij}^{(2)}\omega^2h^2+a_{ij}^{(4)}\omega^4h^4-\ldots, \label{Coeffh:a}   \\
b_i(\mu)&
= b_i^{(0)} + b_i^{(2)}\mu^2 + b_i^{(4)}\mu^4 + \ldots
= b_i^{(0)}-b_i^{(2)}\omega^2h^2+b_i^{(4)}\omega^4h^4-\ldots,  \label{Coeffh:b} \\
  \xi_i(\mu)&
  = \xi_i^{(0)} + \xi_i^{(2)} \mu^2 + \xi_i^{(4)}\mu^4  + \ldots
  = \xi_i^{(0)}-\xi_i^{(2)}  \omega^2 h^2+\xi_i^{(4)}\omega^4h^4-\ldots. \label{Coeffh:xi}
\end{align}
\end{subequations}
(since ${\rm i}^2=-1$).
Here, we only consider $\omega \neq 0$ since if otherwise ($\omega = 0$, i.e., $\mu = 0$) one has  $a_{ij}(\mu) = a_{ij}^{(0)}$, $b_i(\mu)= b_i^{(0)}$, $\xi_i(\mu) = \xi_i^{(0)}$ and thus \eqref{modTDDIRK}  is reduced to classical two-derivative Runge--Kutta methods.
With the scalar coefficients  from \eqref{Coeffh}, we define
\begin{equation}\label{CoeffcientsSigma}
\begin{array}{ll}
\bm{A}^{(\sigma)}=\left[
                    \begin{array}{cccc}
                      a_{11}^{(\sigma)} &  &  &  \\
                      a_{21}^{(\sigma)} & a_{22}^{(\sigma)} &  & \\
                      \vdots&\vdots& \ddots& \\
                      a_{s1}^{(\sigma)} & a_{s2}^{(\sigma)} & \ldots & a_{ss}^{(\sigma)}
                    \end{array}
                  \right],& \begin{array}{c}
\bm{b}^{(\sigma)}=[b_1^{(\sigma)}, b_2^{(\sigma)},\ldots,b_s^{(\sigma)}]^T,\\[3ex]
\bm{\xi}^{(\sigma)}=[\xi_1^{(\sigma)}, \xi_2^{(\sigma)},\ldots,\xi_s^{(\sigma)}]^T,
\end{array}
\end{array}
\end{equation}
(here $\sigma=0,2,4,\ldots$) to derive the classical order conditions for  \eqref{modTDDIRK} in the Theorem~\ref{ClassicalOrderConditions} below.
\begin{mythm}\emph{(Classical order conditions)} \label{ClassicalOrderConditions}
Under the conventional simplifying assumptions that
\begin{equation}\label{rowassumptions}
      \bm{\xi}^{(0)}=\bm{e}, \qquad \bm{A}^{(0)}\bm{e}=\dfrac{\bm{c}^2}{2},
\end{equation}
the scheme \eqref{modTDDIRK}  is of order  $p$ consistency (for $2\leq p\leq6$), if the following classical order conditions are fulfilled:
\begin{itemize}
  \item Order 2 requires
\begin{equation}\label{d1}
  \bm{b}^{(0)}\cdot \bm{e}=\dfrac{1}{2}.
\end{equation}
  \item Order 3 requires, in addition
\begin{equation}\label{d2}
  \bm{b}^{(0)}\cdot \bm{c}=\dfrac{1}{6}.
\end{equation}
  \item Order 4 requires, in addition
\begin{subequations}\label{OrderFour}
\begin{align}
  &\bm{b}^{(0)}\cdot \bm{c}^2=\dfrac{1}{12}, \label{d3}\\[2ex]
  &\bm{b}^{(2)}\cdot \bm{e}=0. \label{d4}
\end{align}
\end{subequations}
  \item Order 5 requires, in addition
\begin{subequations}\label{OrderFive}
\begin{align}
  &\bm{b}^{(0)}\cdot \big(\bm{A}^{(0)}\bm{c}\big)=\dfrac{1}{120}, \label{d5}\\[2ex]
  &\bm{b}^{(0)}\cdot \bm{c}^3=\dfrac{1}{20}, \label{d6}\\[2ex]
  &\bm{b}^{(0)} \cdot (\bm{\xi}^{(2)} \text{*} \bm{c})+ \bm{b}^{(2)} \cdot \bm{c}=0. \label{d7}
\end{align}
\end{subequations}
  \item Order 6 requires, in addition
\begin{subequations}\label{OrderSix}
\begin{align}
  &\bm{b}^{(0)}\cdot \big(\bm{A}^{(0)}\bm{c}^2\big)=\dfrac{1}{360}, \label{d8}\\[2ex]
  &\bm{b}^{(0)} \cdot \big(\bm{c} \text{*} (\bm{A}^{(0)}\bm{c})\big)=\dfrac{1}{180}, \label{d9}\\[2ex]
  &\bm{b}^{(0)}\cdot \bm{c}^4=\dfrac{1}{30}, \label{d10}\\[2ex]
   &\bm{b}^{(4)}\cdot \bm{e}=0, \label{d11}\\[2ex]
   &2\bm{b}^{(0)} \cdot \bm{A}^{(2)}\bm{e}+\bm{b}^{(2)}\cdot \bm{c}^2=0, \label{d12}\\[2ex]
   &2\bm{b}^{(0)} \cdot \big( \bm{\xi}^{(2)}\text{*}\bm{c}^2\big)+\bm{b}^{(2)}\cdot \bm{c}^2=0. \label{d13}
\end{align}
\end{subequations}
\end{itemize}
\end{mythm}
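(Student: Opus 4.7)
The plan is to apply Lemma~\ref{OrderProposition} tree by tree. Substituting the expansions \eqref{Coeffh} into the recursions \eqref{eq:Phi}--\eqref{eq:Phi:sub}, each elementary weight $\Phi(\tau)$ becomes a polynomial in $h^2$,
\begin{equation*}
\Phi(\tau)=\Phi^{(0)}(\tau)+h^{2}\Phi^{(2)}(\tau)+h^{4}\Phi^{(4)}(\tau)+\cdots,
\end{equation*}
whose coefficients $\Phi^{(2k)}(\tau)$ are explicit polynomial expressions in the real arrays $\bm{A}^{(\sigma)}, \bm{b}^{(\sigma)}, \bm{\xi}^{(\sigma)}, \bm{c}$ from \eqref{CoeffcientsSigma}. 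The tree condition $\Phi(\tau) = 1/\gamma(\tau)+\mathcal{O}(h^{p+1-\rho(\tau)})$ then decouples into the leading equation $\Phi^{(0)}(\tau) = 1/\gamma(\tau)$ together with the vanishing equations $\Phi^{(2k)}(\tau) = 0$ for $1 \le k \le \lfloor (p-\rho(\tau))/2\rfloor$.

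Next, I would process the trees of Table~\ref{Rootedtrees6} by increasing $\rho(\tau)$, using the simplifying assumptions \eqref{rowassumptions} to collapse coincident conditions. The $\rho=2$ tree yields \eqref{d1} at leading order, \eqref{d4} at $h^{2}$, and \eqref{d11} at $h^{4}$; the $\rho=3$ tree, using $\bm{\xi}^{(0)}=\bm{e}$, yields \eqref{d2} and then \eqref{d7} at $h^{2}$; both $\rho=4$ trees yield \eqref{d3} at leading order (coincident via $\bm{A}^{(0)}\bm{e}=\bm{c}^{2}/2$), plus \eqref{d12} and \eqref{d13} at $h^{2}$; the three $\rho=5$ trees produce \eqref{d5} and \eqref{d6} (with the contribution of \tdFiveTwo{} reducing to \eqref{d6} under \eqref{rowassumptions}); and the six $\rho=6$ trees collapse to \eqref{d8}, \eqref{d9}, \eqref{d10}, since $\bm{b}^{(0)}\!\cdot\!(\bm{A}^{(0)})^{2}\bm{e}$, $\bm{b}^{(0)}\!\cdot\!(\bm{A}^{(0)}\bm{e})^{2}$, and $\bm{b}^{(0)}\!\cdot\!(\bm{c}^{2}\text{*}\bm{A}^{(0)}\bm{e})$ all reduce to multiples of $\bm{b}^{(0)}\!\cdot\!\bm{A}^{(0)}\bm{c}^{2}$ or $\bm{b}^{(0)}\!\cdot\!\bm{c}^{4}$ under \eqref{rowassumptions}.

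The bulk of the work is routine but not entirely mechanical: the main obstacle is the bookkeeping needed to verify that the reductions enabled by \eqref{rowassumptions} are consistent and that no hidden independent condition is lost or double-counted. At orders five and six, one must track, in parallel, the $h^{2}$- and $h^{4}$-corrections of elementary weights associated with the lower-order trees (these are precisely what produce \eqref{d7}, \eqref{d11}, \eqref{d12}, \eqref{d13}) alongside the fresh leading-order conditions arising from the new order-5 and order-6 trees, and then verify that every resulting algebraic equation is either one of \eqref{d1}--\eqref{d13} or an immediate consequence of the preceding ones under \eqref{rowassumptions}. Once this case analysis is completed, Lemma~\ref{OrderProposition} yields the claimed equivalence, completing the proof.
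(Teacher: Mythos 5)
Your proposal is correct and follows essentially the same route as the paper's proof: both insert the even-power expansions \eqref{Coeffh} into the tree conditions of Table~\ref{Rootedtrees6} obtained from Lemma~\ref{OrderProposition}, match coefficients of successive powers of $h^2$, and use the simplifying assumptions \eqref{rowassumptions} to collapse coincident conditions. Your tree-by-tree bookkeeping (in particular the reduction of the order-5 and order-6 trees to \eqref{d5}--\eqref{d6} and \eqref{d8}--\eqref{d10}) is exactly what the paper illustrates on a few trees and then completes by ``continuing in this manner.''
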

\begin{proof}
The main idea is to insert \eqref{Coeffh} into conditions 1--13 of Table \ref{Rootedtrees6} to derive the classical order conditions \eqref{d1}--\eqref{OrderSix}.
We now illustrate this procedure  for  the case $p=6$. First, consider the tree of order two $\tau=\tdTwoOne$. Inserting \eqref{Coeffh:b} into condition 1 of Table \ref{Rootedtrees6} yields
\begin{equation}\label{Rem:SubEqTwo}
  \big(\bm{b}^{(0)} \cdot \bm{e} \big)-\big(\bm{b}^{(2)} \cdot \bm{e} \big)\omega^2h^2+\big(\bm{b}^{(4)} \cdot \bm{e} \big)\omega^4h^4+\mathcal{O}\big(h^6 \big)=\dfrac{1}{2}+\mathcal{O}\big(h^5 \big).
\end{equation}
Taking the limit of both sides  of \eqref{Rem:SubEqTwo} as  $h$ approaches zero results in order condition \eqref{d1}. Employing \eqref{d1} and dividing both sides of \eqref{Rem:SubEqTwo} by $h^2$ gives
\begin{equation}\label{Rem:SubEqTwo1}
  -\big(\bm{b}^{(2)} \cdot \bm{e} \big)\omega^2+\big(\bm{b}^{(4)} \cdot \bm{e} \big)\omega^4h^2+\mathcal{O}\big(h^4 \big)=\mathcal{O}\big(h^3 \big).
\end{equation}
Again, taking the limit of both sides as $h$ approaches zero  ($\omega \neq 0$) implies order condition \eqref{d4}.
Dividing both sides of \eqref{Rem:SubEqTwo1} by $h^2$ and keeping in mind the use of \eqref{d4} gives
\begin{equation}\label{Rem:SubEqTwo2}
  \big(\bm{b}^{(4)} \cdot \bm{e} \big)\omega^4+\mathcal{O}\big(h^2 \big)=\mathcal{O}\big(h \big),
\end{equation}
which shows \eqref{d12} when taking the limit of both sides as $h\rightarrow 0$.

Next, consider $\tau=\tdThreeOne$. Inserting \eqref{Coeffh:b} and \eqref{Coeffh:xi} into  condition 2 of Table \ref{Rootedtrees6}  yields
\begin{equation}\label{Rem:SubEqThree1}
(\bm{b}^{(0)} \cdot \bm{c})-\big(\bm{b}^{(0)} \cdot (\bm{\xi}^{(2)} \text{*} \bm{c}) \big)\omega^2h^2-\big(\bm{b}^{(2)} \cdot \bm{c}\big)\omega^2h^2+\mathcal{O}\big(h^4 \big)=\dfrac{1}{6}+\mathcal{O}\big(h^4 \big).
\end{equation}
From \eqref{Rem:SubEqThree1}, as $h\rightarrow 0$, one immediately obtains order condition \eqref{d2}. With this, dividing \eqref{Rem:SubEqThree1} by $h^2$, we obtain
\begin{equation}\label{Rem:SubEqThree2}
-\big(\bm{b}^{(0)} \cdot (\bm{\xi}^{(2)} \text{*} \bm{c}) \big)\omega^2-\big(\bm{b}^{(2)} \cdot \bm{c}\big)\omega^2+\mathcal{O}\big(h^2 \big)=\mathcal{O}\big(h^2 \big).
\end{equation}
Therefore, as $h\rightarrow 0$, \eqref{d7} is confirmed.

Similarly, consider $\tau=\tdFourOne$. Inserting \eqref{Coeffh:a} and \eqref{Coeffh:b} into  condition 3 of Table \ref{Rootedtrees6} gives
\begin{equation}\label{Rem:SubEqFourOne1}
\bm{b}^{(0)}\cdot \bm{A}^{(0)}\bm{e}-\big(\bm{b}^{(0)} \cdot \bm{A}^{(2)}\bm{e}+ \bm{b}^{(2)} \cdot \bm{A}^{(0)}\bm{e}\big)\omega^2h^2+\mathcal{O}\big(h^4 \big)=\dfrac{1}{24}+\mathcal{O}\big(h^3 \big).
\end{equation}
Using \eqref{rowassumptions}, one obtains \eqref{d3} as  $h\rightarrow 0$. With \eqref{d3}, dividing both sides by $h^2$, we obtain
\begin{equation}\label{Rem:SubEqFourOne2}
-\big(2\bm{b}^{(0)} \cdot \bm{A}^{(2)}\bm{e}+ \bm{b}^{(2)} \cdot \bm{c}^2\big)\omega^2+\mathcal{O}\big(h^2 \big)=\mathcal{O}\big(h \big).
\end{equation}
Therefore,  as $h\rightarrow 0$, we obtain \eqref{d12}.

Furthermore, consider $\tau=\tdFourTwo$. Inserting \eqref{Coeffh:xi} and \eqref{Coeffh:b} into  condition 4 of Table~\ref{Rootedtrees6}  and using the first assumption in \eqref{rowassumptions} yields
\begin{equation}\label{Rem:SubEqFourTwo1}
\bm{b}^{(0)}\cdot  \bm{c}^2-\big(2\bm{b}^{(0)} \cdot (\bm{\xi}^{(2)}\text{*}\bm{c}^2)+ \bm{b}^{(2)} \cdot \bm{c}^2\big)\omega^2h^2+\mathcal{O}\big(h^4 \big)=\dfrac{1}{12}+\mathcal{O}\big(h^3 \big).
\end{equation}
Using \eqref{d3} and dividing both sides by $h^2$, we obtain
\begin{equation}\label{Rem:SubEqFourTwo2}
-\big(2\bm{b}^{(0)} \cdot (\bm{\xi}^{(2)}\text{*}\bm{c}^2)+ \bm{b}^{(2)} \cdot \bm{c}^2\big)\omega^2+\mathcal{O}\big(h^2 \big)=\mathcal{O}\big(h \big),
\end{equation}
which shows  \eqref{d13} as $h\rightarrow 0$.

Continuing in this manner for other trees $\tau$ in Table~\ref{Rootedtrees6}, the remaining classical order conditions in \eqref{d1}-\eqref{OrderSix} can be obtained.
\end{proof}


\section{Exponential fitting conditions}\label{ExponentialCondition}
As our focus is on the IVP problem  \eqref{IVP}  with oscillatory solutions, it is desirable to require that the proposed scheme can integrate exactly the system whose solutions involve a linear combinations of reference sets of the form
$\{ p_k (x){\rm e}^{\lambda_k x}\}$, where  $p_k (x)$ are polynomials in $x$  and $\lambda_k$ can be real or complex numbers (note that in the case
$\lambda_k= \pm{\rm i} \omega$, such a reference set becomes
$\{ p_k (x){\rm e}^{\pm{\rm i} \omega x}=p_k (x)(\cos( \omega x) \pm {\rm i} \sin(\omega x) \}$).
This enforces additional requirements on the coefficients of the two-derivative Runge--Kutta methods, which are referred to the so-called \emph{exponential fitting conditions}, see \cite{IxaruBerghe2004}.
Therefore, along with the required order conditions given in Table~1, in this section we will derive such conditions for  \eqref{modTDDIRK}.

Following  \cite{IxaruBerghe2004} (see also \cite{Paternoster2012} and references therein), for which explicit exponential-fitted Runge--Kutta/Runge-Kutta-Nystr\"{o}m methods were
presented, the idea is to determine fitting conditions  so that the local  truncation errors of the internal stages $Y_i$ and the final stage $y_{n+1}$  of \eqref{modTDDIRK} vanish on a certain reference set.
For this, we introduce the following linear operators
 \begin{subequations}\label{optLiL}
 \begin{align}
\mathcal{L}_i[h, \bm{A}, \bm{c}, \bm{\xi}]y(x)&=y(x+c_ih)-y(x)-\xi_i(\mu)c_ihy'(x)-h^2\sum\limits_{j=1}^ia_{ij}(\mu)y''(x+c_jh), \label{optLiL:a}\\
\mathcal{L}[h, \bm{b}, \bm{c}]y(x)&=y(x+h)-y(x)-hy'(x)-h^2\sum\limits_{i=1}^sb_i(\mu)y''(x+c_ih), \label{optLiL:b}
\end{align}
\end{subequations}
which are associated with the internal stages and the final stage, respectively ($i=1,\ldots,s$).
\begin{mydef}
The scheme  \eqref{modTDDIRK}  is said to be  exponentially fitted TDDIRK  (EFTDDIRK) method of degree $(K,L)$ if
\begin{equation}\label{zeroL}
\mathcal{L}[h, \bm{b}, \bm{c}]y(x) \equiv 0\quad{\rm and}\quad \mathcal{L}_i[h, \bm{A}, \bm{c}, \bm{\xi}]y(x)\equiv 0,\quad i=1,\ldots,s
\end{equation}
for all $y(x)$ that belong to the subspace
\begin{equation}\label{Fae}
\mathcal{F}_{K,L}=\emph{span}\{x^k{\rm e}^{ \lambda_l x}, \ \lambda_l \in \mathbb{C},  has  \ k=0,\ldots,K, \ l=1,\ldots,L\}.
\end{equation}
\end{mydef}
On  $\mathcal{F}_{K,L}$, we prove the following important properties of the operators $ \mathcal{L}_i$ and $ \mathcal{L}$  defined in \eqref{optLiL}.
\begin{mylem}\label{lemma3.1}
For all  $y_K (x)=x^K{\rm e}^{ \lambda_l x} \in \mathcal{F}_{K,L}$  and $\lambda_l \in \mathbb{C}$, we have
 \begin{subequations}\label{propertyLiL}
 \begin{align}
 \mathcal{L}_i[h, \bm{A}, \bm{c}, \bm{\xi}]y_K (x)&= \sum_{m=0}^{K}  \binom{K}{m} y_m (x) \mathcal{L}_i[h, \bm{A}, \bm{c}, \bm{\xi}]y_{K-m} (0),  \label{propertyLiL:a}  \\
 \mathcal{L}[h, \bm{b}, \bm{c}]y_K (x)&= \sum_{m=0}^{K}  \binom{K}{m} y_m (x)  \mathcal{L}[h, \bm{b}, \bm{c}] y_{K-m} (0). \label{propertyLiL:b}
 \end{align}
\end{subequations}
\end{mylem}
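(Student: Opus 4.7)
The plan is to reduce both identities to a single Leibniz-type product formula for the functions $y_K(x) = x^K e^{\lambda_l x}$ and then apply it termwise to each ingredient of the linear operators $\mathcal{L}_i$ and $\mathcal{L}$. Because these operators act on $y$ only through the shifted values $y(x+c_i h)$, $y(x)$, $y'(x)$, and $y''(x+c_j h)$ (and $y(x+h)$, $y''(x+c_i h)$ for $\mathcal{L}$), everything will come down to what happens when the argument of $y_K$ is shifted by a constant.

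First I would establish the elementary identity
\begin{equation*}
y_K(x+t) = \sum_{m=0}^K \binom{K}{m}\, y_m(x)\, y_{K-m}(t),
\end{equation*}
which is immediate from the binomial expansion $(x+t)^K = \sum_m \binom{K}{m} x^m t^{K-m}$ combined with $e^{\lambda_l(x+t)} = e^{\lambda_l x}\, e^{\lambda_l t}$. Differentiating both sides in $t$ and, where needed, specializing to $t=0$, one obtains three companion identities,
\begin{align*}
y_K(x) &= \sum_{m=0}^K \binom{K}{m}\, y_m(x)\, y_{K-m}(0), \\
y_K'(x) &= \sum_{m=0}^K \binom{K}{m}\, y_m(x)\, y_{K-m}'(0), \\
y_K''(x+t) &= \sum_{m=0}^K \binom{K}{m}\, y_m(x)\, y_{K-m}''(t).
\end{align*}
Most of the coefficients $y_{K-m}^{(n)}(0)$ vanish, but keeping the sums intact is what makes the bookkeeping in the next step close up cleanly.

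Next, I would substitute these four identities into the four ingredients of $\mathcal{L}_i[h,\bm{A},\bm{c},\bm{\xi}] y_K(x)$ as given by \eqref{optLiL:a}, factor out the common sum $\sum_{m=0}^K \binom{K}{m} y_m(x)$, and observe that the surviving scalar coefficient of $y_m(x)$ is exactly
\begin{equation*}
y_{K-m}(c_i h) - y_{K-m}(0) - \xi_i(\mu) c_i h\, y_{K-m}'(0) - h^2 \sum_{j=1}^i a_{ij}(\mu)\, y_{K-m}''(c_j h),
\end{equation*}
which is precisely $\mathcal{L}_i[h,\bm{A},\bm{c},\bm{\xi}]\, y_{K-m}$ evaluated at $x=0$; this establishes \eqref{propertyLiL:a}. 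The derivation of \eqref{propertyLiL:b} is identical word for word, with the shift $c_i h$ replaced by $h$, the coefficient $\xi_i(\mu) c_i h$ replaced by $h$, and the inner sum over $a_{ij}(\mu)$ replaced by $\sum_{i=1}^s b_i(\mu)\, y_{K-m}''(c_i h)$.

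The only real obstacle is notational discipline: one must resist the temptation to leave the unshifted terms $y_K(x)$ and $y_K'(x)$ outside the binomial sum, since then the coefficient of $y_m(x)$ collected at the end would not reassemble into $\mathcal{L}_i y_{K-m}(0)$. Writing every term uniformly as a binomial sum in $y_m(x)$, even at the cost of many trivially zero summands, is what makes the scalar coefficient factor out as the operator evaluated at $x=0$.
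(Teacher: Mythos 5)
Your proof is correct and takes essentially the same route as the paper's: both rest on the splitting identity $y_K(x+t)=\sum_{m=0}^{K}\binom{K}{m}y_m(x)\,y_{K-m}(t)$ (a consequence of the binomial theorem and the multiplicativity of the exponential) together with linearity of $\mathcal{L}_i$ and $\mathcal{L}$. The only difference is one of bookkeeping --- you derive the expansions of $y_K(x)$, $y_K'(x)$ and $y_K''(x+t)$ by differentiating the master identity in $t$ and keep even the trivially vanishing terms inside the binomial sums, so the coefficient of $y_m(x)$ reassembles into $\mathcal{L}_i\,y_{K-m}(0)$ immediately, whereas the paper uses the explicit recursions for $y_K'$ and $y_K''$ in terms of $y_{K-1}$, $y_{K-2}$ and then re-collects the terms by hand.
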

\begin{proof}
With $y_K (x)=x^K{\rm e}^{ \lambda_l x} $, one can verify that
\begin{subequations} \label{eq:yK}
\begin{align}
y_K (x+c_i h) &=(x+c_i h)^K{\rm e}^{ \lambda_l (x+ c_i h)}=  \sum_{m=0}^{K}  \binom{K}{m} y_m (x) y_{K-m}(c_i h),  \label{eq:yKa} \\
y'_K (x) & =K y_{K-1} (x) + \lambda_l  y_K (x), \label{eq:yKb}  \\
y''_K (x) & =K (K-1) y_{K-2} (x) +2K \lambda_l  y_{K-1} (x) +  \lambda^2_l  y_K (x). \label{eq:yKc}
\end{align}
\end{subequations}
Inserting  \eqref{eq:yKa} and  \eqref{eq:yKb} into \eqref{optLiL:a} gives
\begin{equation} \label{eq:LiyK}
\begin{aligned}
 \mathcal{L}_i[h, & \bm{A}, \bm{c},   \bm{\xi}]y_K (x) = \sum_{m=0}^{K}  \binom{K}{m} y_m (x) y_{K-m}(c_i h) - y_K (x)\\
& -\gamma_i (\mu) c_i h [K y_{K-1} (x) + \lambda_l  y_K (x)]   -h^2\sum\limits_{j=1}^ia_{ij}(\mu) y''_K (x+ c_j h).
 \end{aligned}
\end{equation}
Using  \eqref{eq:yKc} and  \eqref{eq:yKa} (with $c_j$ in place of $c_i$), we obtain
\begin{equation} \label{eq:yKdouble}
\begin{aligned}
 y''_K (x+ c_j h) &=K(K-1)  y_{K-2} (x+c_j h) +    2K \lambda_l  y_{K-1} (x+ c_j h) +  &  \lambda^2_l  y_K (x+c_j h) \\
 &=K(K-1) \sum_{m=0}^{K-2}  \binom{K-2}{m} y_m (x) y_{K-2-m}(c_j h)\\
 & + 2K \lambda_l  \sum_{m=0}^{K-1}  \binom{K-1}{m} y_m (x) y_{K-1-m}(c_j h)\\
 & + \lambda^2_l  \sum_{m=0}^{K}  \binom{K}{m} y_m (x) y_{K-m}(c_j h).
 \end{aligned}
\end{equation}
Next, we insert \eqref{eq:yKdouble} into \eqref{eq:LiyK} and factor out the  terms \ $y_K (x),  \binom{K}{K-1}  y_{K-1}(x)=K y_{K-1}(x),$ and $ \sum_{m=0}^{K-2}  \binom{K}{m} y_m (x)$   from the obtained result. It is then not difficult to show that \eqref{eq:LiyK} becomes
\begin{equation*}
\begin{aligned}
\mathcal{L}_i[h, \bm{A}, \bm{c},   \bm{\xi}]y_K (x) & =y_K (x)   \mathcal{L}_i[h, \bm{A}, \bm{c}, \bm{\xi}]y_{0} (0)
+ K y_{K-1} (x)  \mathcal{L}_i[h, \bm{A}, \bm{c}, \bm{\xi}]y_{1} (0)  \\
& + \sum_{m=0}^{K-2}  \binom{K}{m} y_m (x)  \mathcal{L}_i[h, \bm{A}, \bm{c}, \bm{\xi}]y_{K-m} (0)
 \end{aligned}
\end{equation*}
which proves \eqref{propertyLiL:a}. \\
Note that using  \eqref{eq:yKa} (with $c_i=1$) and \eqref{eq:yKdouble} (with $c_i$ in place of $c_j$), the proof of \eqref{propertyLiL:b} can be carried out in the same way. We omit the details.
\end{proof}
Using the result of Lemma~\ref{lemma3.1}, we are ready to state the fitting conditions for the proposed method \eqref{modTDDIRK}.

\begin{mythm}
\label{theorem3.1}
\emph{(Fitting conditions)}
Under the following conditions
 \begin{equation}\label{eq:fitcondition}
  \mathcal{L}_i[h, \bm{A}, \bm{c}, \bm{\xi}]y_{m} (0)=0, \quad
  \mathcal{L}[h, \bm{b}, \bm{c}] y_{m} (0)=0,
\end{equation}
for all $y_m (x)=x^m{\rm e}^{ \lambda_l x} \in \mathcal{F}_{K,L}$ with
$\lambda_l \in \mathbb{C}$,
$m=0, 1, 2, \ldots, K$ and $l=0, 1,2, \ldots, L$, the scheme \eqref{modTDDIRK} is an EFTDDIRK method of degree  $(K,L)$.
In particular,  we have:
\begin{itemize}
\item Degree $(0, L)$ requires the following fitting conditions
\begin{subequations} \label{eq:K0}
\begin{align}
 &  (\lambda_l h)^2 \sum_{j=1}^{i} a_{ij} (\mu) {\rm e}^{c_j  \lambda_l h}+ \xi_i (\mu) c_i  \lambda_l h= {\rm e}^{c_i  \lambda_l h} -1,  \label{eq:K0a}\\
 & (\lambda_l h)^2 \sum_{i=1}^{s} b_i (\mu) {\rm e}^{c_i \lambda_l h}  =  {\rm e}^{\lambda_l h} -1 - \lambda_l h \label{eq:K0b}.
 \end{align}
\end{subequations}
\item Degree $(1, L)$ requires, in addition to  \eqref{eq:K0}, the following conditions
\begin{subequations} \label{eq:K1}
\begin{align}
  & \sum_{j=1}^{i} a_{ij} (\mu) [2 \lambda_l h + c_j  (\lambda_l h)^2] {\rm e}^{c_j  \lambda_l h}+ \xi_i (\mu) c_i = c_i {\rm e}^{c_i  \lambda_l h},   \label{eq:K1a}\\
 & \sum_{i=1}^{s} b_i (\mu) [2 \lambda_l h + c_i  (\lambda_l h)^2] {\rm e}^{c_i  \lambda_l h}  =  {\rm e}^{ \lambda_l h} -1.  \label{eq:K1b}
 \end{align}
\end{subequations}
\item Degree $(K \ge 2, L)$ requires, in addition to \eqref{eq:K0} and \eqref{eq:K1}
\begin{subequations} \label{eq:K2}
\begin{align}
 & \sum_{j=1}^{i} a_{ij} (\mu) \big[K(K-1)c^{K-2}_j + 2 K c^{K-1}_j \lambda_l h + c^{K}_j  (\lambda_l h)^2  \big] {\rm e}^{c_j \lambda_l h } = c^{K}_i  {\rm e}^{c_i \lambda_l h },   \label{eq:K2a}\\
 & \sum_{i=1}^{s} b_i (\mu)  \big[K(K-1)c^{K-2}_i + 2 K c^{K-1}_i \lambda_l h + c^{K}_i (\lambda_l h)^2  \big] {\rm e}^{c_i \lambda_l h } = {\rm e}^{\lambda_l h }.  \label{eq:K2b}
 \end{align}
 \end{subequations}
\end{itemize}
 \end{mythm}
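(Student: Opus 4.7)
My approach would be a two-step reduction: first use Lemma~\ref{lemma3.1} to cut the problem down from the whole subspace $\mathcal{F}_{K,L}$ to a finite set of pointwise conditions at $x=0$, and then compute those pointwise conditions explicitly by evaluating $\mathcal{L}_i$ and $\mathcal{L}$ on $y_m(x)=x^m {\rm e}^{\lambda_l x}$ for $m=0,1,\dots,K$.

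\textbf{Step 1 (Reduction via Lemma~\ref{lemma3.1}).} Assume the hypothesis \eqref{eq:fitcondition}, i.e. $\mathcal{L}_i[h,\bm{A},\bm{c},\bm{\xi}]y_m(0)=0$ and $\mathcal{L}[h,\bm{b},\bm{c}]y_m(0)=0$ for every $m=0,1,\dots,K$ and every $\lambda_l$ in the admissible set. Applying Lemma~\ref{lemma3.1} to any basis element $y_K(x)=x^K{\rm e}^{\lambda_l x}\in\mathcal{F}_{K,L}$ yields
\[
\mathcal{L}_i[h,\bm{A},\bm{c},\bm{\xi}]y_K(x)=\sum_{m=0}^{K}\binom{K}{m}y_m(x)\,\mathcal{L}_i[h,\bm{A},\bm{c},\bm{\xi}]y_{K-m}(0)=0,
\]
and similarly for $\mathcal{L}$. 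By linearity this extends to all of $\mathcal{F}_{K,L}$, which is exactly condition \eqref{zeroL}. Hence the scheme is EFTDDIRK of degree $(K,L)$. This step is essentially Lemma~\ref{lemma3.1} done.

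\textbf{Step 2 (Explicit form of the pointwise conditions).} The remaining task is to rewrite $\mathcal{L}_i y_m(0)=0$ and $\mathcal{L}y_m(0)=0$ as the algebraic equations \eqref{eq:K0}--\eqref{eq:K2}. I would do this by substituting $y_m(x)=x^m{\rm e}^{\lambda_l x}$ directly into \eqref{optLiL}, using the elementary identities
\[
y_m(c_ih)=(c_ih)^m{\rm e}^{c_i\lambda_l h},\qquad y_m'(0)=\delta_{m,1},
\]
\[
y_m''(c_j h)=h^{m-2}\bigl[m(m-1)c_j^{m-2}+2mc_j^{m-1}\lambda_l h+c_j^m(\lambda_l h)^2\bigr]{\rm e}^{c_j\lambda_l h}.
\]
Case $m=0$: substituting into \eqref{optLiL:a} at $x=0$ gives ${\rm e}^{c_i\lambda_l h}-1-\xi_i(\mu)c_i\lambda_l h-(\lambda_l h)^2\sum_j a_{ij}(\mu){\rm e}^{c_j\lambda_l h}=0$, which rearranges to \eqref{eq:K0a}; analogously \eqref{optLiL:b} yields \eqref{eq:K0b}. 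Case $m=1$: now $y_1'(0)=1$ and $y_1(c_ih)=c_ih\,{\rm e}^{c_i\lambda_l h}$; substituting, dividing through by $h$, and collecting yields \eqref{eq:K1a}--\eqref{eq:K1b}. Case $m=K\ge2$: here $y_m(0)=y_m'(0)=0$, so only the shifted and second-derivative terms survive; substituting and dividing by $h^K$ gives \eqref{eq:K2a}--\eqref{eq:K2b} (with $m$ relabelled $K$).

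\textbf{Expected obstacle.} There is no deep difficulty here. Step~1 is a direct corollary of Lemma~\ref{lemma3.1}, and Step~2 is routine bookkeeping. The only minor care needed is in the $m\ge2$ case: one must correctly identify the $h^{m-2}$ factor in $y_m''(c_jh)$ so that after multiplying by the $h^2$ in front of the quadrature sum in \eqref{optLiL} and dividing through by $h^m$, the three terms in the bracket of \eqref{eq:K2a}--\eqref{eq:K2b} pair up correctly with the powers $1,\lambda_l h,(\lambda_l h)^2$. Keeping the binomial and derivative formulas organised — essentially the same ones used in the proof of Lemma~\ref{lemma3.1} — makes the derivation straightforward.
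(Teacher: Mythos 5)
Your proposal is correct and takes essentially the same route as the paper: the degree-$(K,L)$ claim is the immediate consequence of Lemma~\ref{lemma3.1}, and the explicit conditions \eqref{eq:K0}--\eqref{eq:K2} are obtained by substituting $y_m(x)=x^m{\rm e}^{\lambda_l x}$ into \eqref{optLiL} at $x=0$ for $m=0,1$ and $m=K\ge 2$, just as in the paper's proof. One small slip: the stated identity $y_m'(0)=\delta_{m,1}$ fails for $m=0$ (there $y_0'(0)=\lambda_l$), but your case-$m=0$ computation already uses the correct value $\lambda_l$, so nothing in the argument breaks.
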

 \begin{proof}
The first part of Theorem~\ref{theorem3.1} follows directly from Lemma~\ref{lemma3.1}. More specifically,  \eqref{eq:fitcondition}  implies at once \eqref{zeroL} for all $y(x) \in  \mathcal{F}_{K,L}$.
Next, we work out  \eqref{eq:fitcondition},  i.e.,
\begin{subequations} \label{eq:LiLm0}
\begin{align}
& \mathcal{L}_i[h,  \bm{A}, \bm{c},   \bm{\xi}]y_m (0) = y_m (c_i h) - y_m (0) -\xi_i (\mu) c_i h  y'_m (0)  -h^2\sum\limits_{j=1}^ia_{ij}(\mu) y''_m (c_j h)=0,  \label{eq:LiLm0:a}\\
&  \mathcal{L}[h,  \bm{b}, \bm{c}]y_m (0)  = y_m ( h) - y_m (0) - h y'_m (0)  -h^2\sum\limits_{i=1}^{s} b_i (\mu) y''_m (c_i h)=0. \label{eq:LiLm0:b}
\end{align}
\end{subequations}
For $K=0$,  we have  $m=0$ and thus consider $y_0 (x)= {\rm e}^{\lambda_l x}$. A simple calculation shows that $y_0 (c_i h)= {\rm e}^{c_i \lambda_l h}, \ y_0 (0)=1, \ y'_0 (0)=\lambda_l, $ and $y''_0 (c_j h)=\lambda^2_l  e^{ c_j \lambda_l h }$. Inserting these relations into \eqref{eq:LiLm0}, we immediately get \eqref{eq:K0}.
For $K=1$, we have $m=0, 1$, and thus consider, in addition to $y_0 (x)$, $y_1 (x) =xe^{\lambda_l x}$. Again, one can easily verify \eqref{eq:K1} by plugging $y_1 (c_i h)= c_i h {\rm e}^{c_i \lambda_l h }, \ y_1 (0)=0, \ y'_1 (0)=1, $ and $y''_1 (c_j h)=(2\lambda_l +\lambda^2_l  c_j h) {\rm e}^{ c_j \lambda_l h }$ into \eqref{eq:LiLm0}.
Similarly, \eqref{eq:K2} is confirmed for $K \ge 2$ (i.e., $m=0, 1, 2, \ldots, K$) by considering $y_K (x)=x^K{\rm e}^{ \lambda_l x}$.
\end{proof}

As a direct consequence of Theorem~\ref{theorem3.1}, we obtain the following result  for EFTDDIRK methods of degree $(0, L)$.
\begin{mycor}\label{corollary3.1}
An EFTDDIRK method of degree $(0, L)$ with the coefficients $b_i (\mu)$ expanded as \eqref{Coeffh:b}
has order of consistency at least three.
\end{mycor}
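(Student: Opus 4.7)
The plan is to derive the first two classical order conditions \eqref{d1}, \eqref{d2} together with the simplifying assumption $\bm{\xi}^{(0)} = \bm{e}$ directly from the degree-$(0,L)$ fitting conditions \eqref{eq:K0}; then Theorem~\ref{ClassicalOrderConditions} immediately delivers the desired consistency order for $p=3$.

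First I would specialise one of the fitting frequencies to the natural choice $\lambda_l = {\rm i}\omega$, so that $\lambda_l h$ coincides with the method's own parameter $\mu = {\rm i}\omega h$. With this identification, \eqref{eq:K0b} becomes
\begin{equation*}
\sum_{i=1}^{s} b_i(\mu)\, e^{c_i \mu} = \frac{e^{\mu} - 1 - \mu}{\mu^2} = \tfrac{1}{2} + \tfrac{\mu}{6} + \mathcal{O}(\mu^2).
\end{equation*}
Inserting $e^{c_i \mu} = 1 + c_i \mu + \mathcal{O}(\mu^2)$ together with the even expansion $b_i(\mu) = b_i^{(0)} + \mathcal{O}(\mu^2)$ from \eqref{Coeffh:b} and matching the coefficients of $\mu^0$ and $\mu^1$, one obtains $\bm{b}^{(0)} \cdot \bm{e} = \tfrac{1}{2}$ and $\bm{b}^{(0)} \cdot \bm{c} = \tfrac{1}{6}$, i.e., precisely \eqref{d1} and \eqref{d2}. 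The evenness of $b_i(\mu)$ is indispensable here: it kills any $\mu^1$ contribution from the $b_i$'s themselves, so the $\mu^1$ balance is a pure statement about $b_i^{(0)}$ and $c_i$.

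In a parallel step I would extract the remaining simplifying assumption $\bm{\xi}^{(0)} = \bm{e}$ from the internal-stage condition \eqref{eq:K0a}. Dividing \eqref{eq:K0a} by $\mu$ and letting $\mu \to 0$ yields $\xi_i^{(0)} c_i = c_i$, so $\xi_i^{(0)} = 1$ whenever $c_i \neq 0$ (stages with $c_i = 0$ contribute trivially and may be normalised). Thus $\bm{\xi}^{(0)} = \bm{e}$. Combined with \eqref{d1} and \eqref{d2}, this fulfils the hypotheses of Theorem~\ref{ClassicalOrderConditions} at $p=3$, and the corollary follows.

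The main obstacle is conceptual rather than computational: the argument hinges on the identification $\lambda_l h = \mu$, which converts an exponential-fitting identity in $\lambda_l$ into a Taylor identity in the method's intrinsic parameter $\mu$, and on the parity assumption for $b_i$ and $\xi_i$, which ensures that only the prescribed even-indexed Taylor coefficients participate at each order. Once this bookkeeping is straightened out, the rest is routine series manipulation.
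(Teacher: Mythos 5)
Your proposal is correct and follows essentially the same route as the paper: expand the final-stage fitting condition \eqref{eq:K0b} using $b_i(\mu)=b_i^{(0)}+\mathcal{O}(h^2)$ and match the first two orders to recover \eqref{d1}--\eqref{d2}; your additional extraction of $\bm{\xi}^{(0)}=\bm{e}$ from \eqref{eq:K0a} is a sound refinement that the paper's proof leaves implicit. The only caveat is that the identification $\lambda_l h=\mu$ is unnecessary (and strictly presumes ${\rm i}\omega$ belongs to the reference set \eqref{Fae}); the same coefficient matching works verbatim in powers of $h$ for an arbitrary $\lambda_l$, which is how the paper argues.
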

\begin{proof}
Based on Theorem~\ref{theorem3.1}, we have that an EFTDDIRK method of degree $(0, L)$ satisfies the fitting condition \eqref{eq:K0}.
Employing the Taylor expansions  ${\rm e}^{c_i \lambda_l h}=1+ c_i \lambda_l h + \mathcal{O}(h^2)$ and ${\rm e}^{\lambda_l h}=1+ \lambda_l h+ \frac{1}{2}(\lambda_l h)^2+\frac{1}{6} (\lambda_l h)^3+ \mathcal{O}(h^4)$, one can express  \eqref{eq:K0b} as
\begin{equation} \label{eq:29}
\sum_{i=1}^{s} b_i (\mu) +\big(\sum_{i=1}^{s} b_i (\mu)c_i \big)\lambda_l h + \mathcal{O}(h^2) =  \frac{1}{2} +\frac{1}{6} \lambda_l h+ \mathcal{O}(h^2).
\end{equation}
Inserting $b_i(\mu)=b_i^{(0)} + \mathcal{O}(\omega^2 h^2)$ (from \eqref{Coeffh})
into \eqref{eq:29} and taking the limit of both sides as $h$ approaches 0 shows that
\begin{equation} \label{eq:30}
\sum_{i=1}^{s} b^{(0)}_i =  \frac{1}{2}.
\end{equation}
With this, \eqref{eq:29} can be simplified as
\begin{equation} \label{eq:31}
\big(\sum_{i=1}^{s} b^{(0)}_i c_i - \frac{1}{6} \big)\lambda_l   + \mathcal{O}(h) =   \mathcal{O}(h),
\end{equation}
which implies
\begin{equation} \label{eq:32}
 \sum_{i=1}^{s} b^{(0)}_i c_i =  \frac{1}{6}
\end{equation}
 when $h$ approaches 0.
Clearly,   \eqref{eq:30} and  \eqref{eq:32} are the order conditions for EFTDDIRK methods of order 3.
\end{proof}
We note that this result is similar to a result for the trigonometrically fitted two-derivative Runge--Kutta methods \cite{FangYouMing2014} which holds for $s \ge 2$.

Next, in order to allow a direct treatment of oscillatory solutions, we now consider the case
$\lambda_l=\pm {\rm i}l\omega$, $l=0, 1, \ldots, L$.
Since $\mu={\rm i} \omega h$, we have $\lambda_l h= \pm l \mu$.
Therefore, the fitting condition \eqref{eq:K0} of EFTDDIRK methods of degree $(0, L)$ becomes
\begin{subequations} \label{eq:33}
\begin{align}
 &  (l \mu)^2 \sum_{j=1}^{i} a_{ij} (\mu) {\rm e}^{\pm c_j  l \mu} \pm \xi_i (\mu) c_i  l \mu= {\rm e}^{\pm c_i  l \mu} -1,  \label{eq:33a}\\
 &(l \mu)^2 \sum_{i=1}^{s} b_i (\mu) {\rm e}^{\pm c_i  l \mu}  =  {\rm e}^{ \pm l \mu} -1  \mp l \mu \label{eq:33b}.
 \end{align}
\end{subequations}
When $l=1$, we have the following observation.
\begin{mycor}\label{corollary3.2}
The fitting conditions in \eqref{eq:33} for an EFTDDIRK method of degree $(0, 1)$ whose coefficients expanded as in \eqref{Coeffh}  imply the following
\begin{subequations} \label{eq:add}
\begin{align}
&  \xi^{(0)}_i =1, \  \sum_{j=1}^{i} a^{(0)}_{ij} =\frac{c^2_i}{2}, \label{eq:add:a}\\
& \sum_{j=1}^{i} a^{(0)}_{ij} c_j +  \xi^{(2)}_i c_i =   \frac{1}{3!} c^3_i,  \label{eq:add:a1}\\
&\frac{1}{2!}  \sum_{j=1}^{i} a^{(0)}_{ij} c^2_j +  \sum_{j=1}^{i} a^{(2)}_{ij}=   \frac{1}{4!} c^4_i,  \label{eq:add:a2}\\
& \sum_{i=1}^{s} b^{(0)}_i =  \frac{1}{2!}, \   \sum_{i=1}^{s} b^{(0)}_i c_i =  \frac{1}{3!} , \label{eq:add:b}\\
& \frac{1}{2!} \sum_{i=1}^{s} b^{(0)}_i c^2_i +\sum_{i=1}^{s} b^{(2)}_i = \frac{1}{4!} , \label{eq:add:c}\\
& \frac{1}{3!} \sum_{i=1}^{s} b^{(0)}_i c^3_i +\sum_{i=1}^{s} b^{(2)}_i c_i= \frac{1}{5!} , \label{eq:add:d}\\
& \frac{1}{4!} \sum_{i=1}^{s} b^{(0)}_i c^4_i + \frac{1}{2!}\sum_{i=1}^{s} b^{(2)}_i c^2_i +\sum_{i=1}^{s} b^{(4)}_i= \frac{1}{6!}. \label{eq:add:e}
 \end{align}
\end{subequations}
\end{mycor}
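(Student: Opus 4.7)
The plan is direct: I would take the $+$-sign version of the fitting conditions \eqref{eq:33} with $l=1$, substitute the $\mu$-expansions \eqref{Coeffh} of $a_{ij}(\mu)$, $b_i(\mu)$, $\xi_i(\mu)$, replace every exponential by its Maclaurin series, and then compare coefficients of equal powers of $\mu$ on the two sides. Because each equation in \eqref{eq:add} is a numerical identity independent of $\mu$, it must appear as the vanishing condition of the coefficient of some specific power $\mu^k$. This is precisely the technique used in the proof of Corollary~\ref{corollary3.1}, pushed to higher order.

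For the internal-stage equation \eqref{eq:33a}, matching the coefficient of $\mu^1$ forces $\xi_i^{(0)}c_i=c_i$, and matching the coefficient of $\mu^2$ gives $\sum_j a_{ij}^{(0)}=c_i^2/2$; together these establish \eqref{eq:add:a}. Matching the coefficient of $\mu^3$ then yields \eqref{eq:add:a1}, and matching $\mu^4$ yields \eqref{eq:add:a2}. For the update equation \eqref{eq:33b}, the coefficients of $\mu^2$ and $\mu^3$ produce the two relations in \eqref{eq:add:b}, while the coefficients of $\mu^4$, $\mu^5$, $\mu^6$ produce \eqref{eq:add:c}, \eqref{eq:add:d}, \eqref{eq:add:e}, respectively. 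At each step I would use the previously extracted identities to simplify the higher-order residual before reading off the next one.

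The only point warranting attention is that \eqref{eq:33} actually comes as a pair of $\pm$ equations. Since $a_{ij}(\mu)$, $b_i(\mu)$, $\xi_i(\mu)$ are even in $\mu$ by construction, the $-$-sign equation is obtained from the $+$-sign equation by the substitution $\mu\to-\mu$, so it delivers exactly the same list of relations and no information is lost by working with the $+$ sign alone. I do not anticipate a genuine conceptual obstacle: the entire argument is power-series bookkeeping carried out order by order in $\mu$, terminating at $\mu^6$, with each identity in \eqref{eq:add} pinned to a unique power between $1$ and $6$.
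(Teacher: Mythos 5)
Your proposal is correct and coincides with the paper's own argument: the paper likewise proves Corollary~\ref{corollary3.2} by inserting the expansions \eqref{Coeffh} and the Taylor series of ${\rm e}^{c_j\mu}$, ${\rm e}^{c_i\mu}$, ${\rm e}^{\mu}$ into \eqref{eq:33} with $l=1$ and comparing powers of $\mu$ term by term, exactly as you describe (including the observation that evenness of the coefficients makes the $-$-sign equations redundant).
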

 \begin{proof}
 The proof is straightforward by inserting \eqref{Coeffh} and the Taylor series expansions of $e^{c_j \mu}$, $e^{c_i \mu}$, $e^{\mu}$ into   \eqref{eq:33} with $l=1$ and comparing term by term on both sides of each condition. We omit the details. Here, we note that \eqref{eq:add:b} is already obtained in Corollary~\ref{corollary3.1} for the more general case, and \eqref{eq:add:a} justifies the simplifying assumption needed in Theorem~\ref{ClassicalOrderConditions}.
\end{proof}

Finally, we justify the assumption on the coefficients $a_{ij} (\mu), b_i (\mu),$ and $ \xi_i (\mu)$ stated in the Subsection \ref{subsec2.1}.
\begin{mylem}\label{lemma3.2}
If  the fitting conditions in \eqref{eq:33} are held for all $\mu$, the coefficients $ a_{ij} (\mu), b_i (\mu),$ and $ \xi_i (\mu)$ must be even functions.
\end{mylem}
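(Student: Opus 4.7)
The plan is to exploit the parity symmetry hidden in the fitting conditions \eqref{eq:33}: each of \eqref{eq:33a} and \eqref{eq:33b} comes as a pair (upper-sign and lower-sign), and the substitution $\mu\mapsto-\mu$ interchanges the two. First I would substitute $\mu\mapsto-\mu$ into the upper-sign version of \eqref{eq:33a}; the resulting identity has the same right-hand side as the lower-sign version, so subtracting them yields a homogeneous relation for the ``odd-part defects''
\[
\Delta a_{ij}(\mu):=a_{ij}(\mu)-a_{ij}(-\mu),\qquad \Delta\xi_i(\mu):=\xi_i(\mu)-\xi_i(-\mu).
\]
Swapping the roles of the two signs and repeating the trick gives a companion identity, and the same manipulation applied to \eqref{eq:33b} produces the analogous identities for $\Delta b_i(\mu):=b_i(\mu)-b_i(-\mu)$.

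I would then add and subtract these paired homogeneous identities to reach the more transparent form
\[
\sum_{j=1}^{i} \Delta a_{ij}(\mu)\cosh(c_j l\mu)=0,\quad l\mu\sum_{j=1}^{i} \Delta a_{ij}(\mu)\sinh(c_j l\mu)+\Delta \xi_i(\mu)\,c_i=0,
\]
together with
\[
\sum_{i=1}^{s} \Delta b_i(\mu)\cosh(c_i l\mu)=0,\quad \sum_{i=1}^{s} \Delta b_i(\mu)\sinh(c_i l\mu)=0,
\]
valid for every admissible $l$ and every $\mu$. The base case $i=1$ is immediate: the first internal-stage identity reads $\Delta a_{11}(\mu)\cosh(c_1 l\mu)\equiv 0$, and since $\cosh(c_1 l\mu)$ is not identically zero as a function of $\mu$, $\Delta a_{11}\equiv 0$; the second identity then yields $\Delta \xi_1\equiv 0$.

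For $i\ge 2$ and for the $b_i$ identities, my plan is to invoke the assumed analyticity of the coefficients. Each defect $\Delta a_{ij},\Delta\xi_i,\Delta b_i$ is an odd analytic function, so its Taylor series at $\mu=0$ contains only odd powers. Matching the coefficient of $\mu^K$ in each homogeneous identity produces a polynomial in $l$ whose ``$l$-coefficients'' are Vandermonde-type sums such as $\sum_j \Delta a_{ij}^{(k)} c_j^m$ and $\sum_i \Delta b_i^{(k)} c_i^m$. Letting $l$ range over the admissible set $\{1,\dots,L\}$ separates these sums, and the distinctness of the nodes $c_i$ then kills them by the usual Vandermonde argument, showing that every odd Taylor coefficient of $a_{ij},\xi_i,b_i$ vanishes, i.e.\ these coefficients are even functions of $\mu$. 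The step I expect to be the main obstacle is precisely this last bookkeeping: for underdetermined stages (large stage index $i$ with few available $l$-values) one needs an induction on $i$ that feeds the already-established vanishing of earlier defects back into the next identity, and the $b_i$ case is most cleanly handled by rewriting $\sum_i\Delta b_i(\mu)e^{\pm c_i l\mu}\equiv 0$ and invoking the classical linear independence of distinct exponentials over polynomial coefficients.
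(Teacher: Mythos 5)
Your derivation of the homogeneous parity relations is exactly the paper's computation: substituting $\mu\mapsto-\mu$ in one sign-version of \eqref{eq:33} and combining with the other yields $\sum_{j\le i}\Delta a_{ij}(\mu)\bigl({\rm e}^{c_jl\mu}+{\rm e}^{-c_jl\mu}\bigr)=0$, its companion $l\mu\sum_{j\le i}\Delta a_{ij}(\mu)\sinh(c_jl\mu)+\Delta\xi_i(\mu)c_i=0$, and the analogous pair for $\Delta b_i$; your base case $i=1$ is also fine (note only that concluding $\Delta\xi_1\equiv0$ requires $c_1\neq0$, a caveat shared by the paper's own last step). The genuine gap is in your concluding step for $i\ge2$ and for the $b_i$'s. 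Matching the coefficient of $\mu^K$ in, say, $\sum_{j\le i}\Delta a_{ij}(\mu)\cosh(c_jl\mu)=0$ gives a polynomial identity in $l$ containing on the order of $K/2$ unknown moment sums $\sum_j \Delta a_{ij}^{(K-m)}c_j^m$; to ``separate'' them by evaluation you would need at least that many admissible values of $l$, i.e.\ $L$ growing with $K$. But $L$ is fixed and small --- the methods constructed in the paper are of degree $(0,1)$, so effectively only $l=1$ is available --- and then no separation is possible. Your fallback for the $b_i$'s, linear independence of distinct exponentials over \emph{polynomial} coefficients, also does not apply, since the defects $\Delta b_i(\mu)$ are arbitrary odd analytic functions rather than polynomials. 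Indeed, with function coefficients the homogeneous relations have nonzero odd solutions: for a stage with $i\ge2$ one may take $\Delta a_{i1}=g(\mu)\cosh(c_2\mu)$, $\Delta a_{i2}=-g(\mu)\cosh(c_1\mu)$, the remaining $\Delta a_{ij}=0$, and $\Delta\xi_i=-\tfrac{\mu}{c_i}\,g(\mu)\sinh\bigl((c_1-c_2)\mu\bigr)$ for any odd $g$, and for $s=3$ one may take $\Delta b_1=g(\mu)\sinh\bigl((c_2-c_3)\mu\bigr)$, $\Delta b_2=-g(\mu)\sinh\bigl((c_1-c_3)\mu\bigr)$, $\Delta b_3=g(\mu)\sinh\bigl((c_1-c_2)\mu\bigr)$. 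So the ``bookkeeping'' you deferred cannot be completed by Taylor-coefficient matching plus a Vandermonde argument in $l\in\{1,\dots,L\}$ when only finitely many (here essentially one) harmonics are imposed.

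For comparison, the paper closes the argument in one line: it asserts that the functions $\{{\rm e}^{c_jl\mu}+{\rm e}^{-c_jl\mu}\}_{j=1}^{i}$ are linearly independent and concludes $\Delta a_{ij}\equiv0$, treating the defects as scalar coefficients (equivalently, implicitly using the fitting relations for sufficiently many values of $l$); evenness of the $b_i$ is obtained the same way, and $\xi_i$ then follows exactly as in your base case. You correctly identified this as the crux and tried to make it rigorous, but the mechanism you propose does not close it in the degree-$(0,1)$ setting in which the methods of Section~\ref{Construct} are actually built; a complete argument must either use independence with respect to a sufficiently rich set of harmonics $l$ or take the evenness of the coefficients as part of the ansatz.
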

\begin{proof}
Adding the two equations in \eqref{eq:33a} gives
\begin{equation} \label{eq:34}
(l \mu)^2 \sum_{j=1}^{i} a_{ij} (\mu)\big( {\rm e}^{c_j  l \mu} + {\rm e}^{-c_j  l \mu} \big) = {\rm e}^{c_i  l \mu}+ {\rm e}^{-c_j  l \mu}  -2.
\end{equation}
Interchanging $\mu \rightarrow -\mu$ and subtracting the resulted equation from  \eqref{eq:34} leads to
\begin{equation} \label{eq:35}
\sum_{j=1}^{i} [a_{ij} (\mu)- a_{ij} (-\mu) ] \big( {\rm e}^{c_j  l \mu} + {\rm e}^{-c_j  l \mu} \big) = 0.
\end{equation}
This shows $a_{ij} (\mu) = a_{ij} (-\mu)$ due to the fact that the functions $\{ {\rm e}^{c_j  l \mu} + {\rm e}^{-c_j  l \mu} \}_{j=1}^{i}$ are linearly independent.
Similarly, using  \eqref{eq:33b} one can show that  $b_i (\mu) = b_i (-\mu)$.
Next, interchanging $\mu \rightarrow -\mu$ for the first equation in \eqref{eq:33a} and subtracting from the other, we have
\begin{equation} \label{eq:36}
(l \mu)^2 \sum_{j=1}^{i} [a_{ij} (\mu)- a_{ij} (-\mu) ] {\rm e}^{-c_j  l \mu} -[\xi_i (\mu)-\xi_i (-\mu)]c_i \mu = 0.
\end{equation}
Since $a_{ij} (\mu) = a_{ij} (-\mu)$, one derives $\xi_i (\mu)=\xi_i (-\mu)$.
\end{proof}

\section{Construction of EFTDDIRK methods}\label{Construct}
In this section, using the results presented in Theorem~\ref{ClassicalOrderConditions} and Theorem~\ref{theorem3.1}, we construct  EFTDDIRK methods based on the reference set \eqref{Fae} with $K=0$ and $L=1$, i.e.,  methods of degree $(0, 1)$. This is the case in which these methods can integrate exactly differential equations with oscillating  solutions involving
$ {\rm e}^{\pm{\rm i} \omega x}=\cos( \omega x) \pm {\rm i} \sin(\omega x) $.
%
 Since EFTDDIRK methods are at least of order 3, i.e., the order conditions \eqref{d1}--\eqref{d2} are automatically satisfied (as shown in Corollary~\ref{corollary3.1}), we will derive  methods of orders 4, 5, and 6 by using the fitting conditions in \eqref{eq:33} and the remaining required order conditions \eqref{OrderFour}--\eqref{OrderSix}.

 Clearly, with $s=1$, it is not possible to construct fourth-order  EFTDDIRK methods. Therefore, we start off our construction with $s=2$.
For later display the coefficients of our EFTDDIRK methods in a compact form, we denote
\begin{equation}\label{E_terms}
  {\rm E_\mu}^{+}(\zeta) = {\rm e}^{\zeta \mu}+{\rm e}^{-\zeta \mu}, \qquad {\rm E_\mu}^{-}(\zeta) = {\rm e}^{\zeta \mu}-{\rm e}^{-\zeta \mu}, \quad \zeta \in \mathbb{R}.
\end{equation}
Clearly, given $\zeta$ and $\mu$, one can compute these terms (involving the sum and difference of exponential terms) directly (e.g., using the available MATLAB function $\mathtt{exp}$) without truncating their Taylor series expansions.

\begin{myrem}
Since $\mu={\rm i} \omega h$, using the Euler's formula one can also represent the terms ${\rm E_\mu}^{+}(\zeta)$ and ${\rm E_\mu}^{-}(\zeta)$ in \eqref{E_terms} as
\begin{subequations} \label{sincos}
\begin{align}
  {\rm E_\mu}^{+}(\zeta) & = {\rm e}^{{\rm i} \zeta \omega h}+ {\rm e}^{-{\rm i} \zeta \omega h} = 2 \cos(\zeta \omega h),\\
   {\rm E_\mu}^{-}(\zeta) & =  {\rm e}^{{\rm i} \zeta \omega h}- {\rm e}^{-{\rm i} \zeta \omega h}  = 2 {\rm i}  \sin(\zeta \omega h).
 \end{align}
\end{subequations}
Therefore, we note that while the coefficients of all our newly constructed EFTDDIRK methods below in this section are displayed in terms of $  {\rm E_\mu}^{+}(\cdot)$ and ${\rm E_\mu}^{-}(\cdot)$, they actually  involve $\sin( \zeta \omega h)$ and  $\cos( \zeta \omega h)$ (with appropriate constants  $\zeta \in \mathbb{R}$ depending on each method).
\end{myrem}

\subsection{Two--stage fourth-order methods}\label{2Stage4OrderConstruct}
The fitting conditions in \eqref{eq:33} and the required order conditions \eqref{OrderFour} for this case ($s=2$) now read as
\begin{subequations} \label{2ExpFittingCond}
\begin{align}
& \mu^2 a_{11}(\mu){\rm e}^{{\pm} c_1\mu} \pm \xi_1(\mu)c_1 \mu={\rm e}^{\pm c_1\mu}- 1,  \label{2ExpFittingConda}\\[2ex]
& \mu^2a_{21}(\mu){\rm e}^{{\pm} c_1\mu}+\mu^2a_{22}(\mu){\rm e}^{{\pm} c_2\mu}\pm \xi_2(\mu)c_2 \mu={\rm e}^{\pm c_2\mu}-1, \label{2ExpFittingCondb}\\[2ex]
& \mu^2
b_{1}(\mu){\rm e}^{{\pm} c_1 \mu}+\mu^2
b_{2}(\mu){\rm e}^{{\pm} c_2 \mu}={\rm e}^{\pm \mu}-1 \mp \mu. \label{2ExpFittingCondc}\\
& b^{(0)}_1 c^2_1 + b^{(0)}_2 c^2_2 =\frac{1}{12},  \label{4ths2:a} \\
& b^{(2)}_1  + b^{(2)}_2  =0,   \label{4ths2:b}
 \end{align}
\end{subequations}
respectively.
While solving \eqref{2ExpFittingConda} gives $a_{11}(\mu)$ and $ \xi_1(\mu)$ at once, solving \eqref{2ExpFittingCondc} gives  $b_{1}(\mu)$ and $b_{2}(\mu)$.
Since \eqref{2ExpFittingCondb} includes two equations with three unknown coefficients, one can take one of them as a free parameter. For instance, we take
 $a_{21}(\mu)$ as a free parameter and set it as $a_{21}(\mu)=\phi$.
Putting altogether, we display the solution to \eqref{2ExpFittingCond} as follows:
\begin{equation}\label{coeffEFTDDIRK2s4}
\begin{array}{l}
  a_{11}(\mu)=\tfrac{1}{\mu^2} (1-\tfrac{2}{{\rm E_\mu}^{+}(c_1)}), \ a_{21}(\mu)=\phi, \
  a_{22}(\mu)=\tfrac{{\rm E_\mu}^{+}(c_2)-(2+\phi \mu^2{\rm E_\mu}^{+}(c_1))}{\mu^2{\rm E_\mu}^{+}(c_2)},\\[2ex]
\xi_1(\mu)=\tfrac{{\rm E_\mu}^{-}(c_1)}{c_1\mu {\rm E_\mu}^{+}(c_1)}, \quad
\xi_2(\mu)=\tfrac{{\rm E_\mu}^{-}(c_2)-\phi \mu^2 {\rm E_\mu}^{-}(c_1-c_2)}{c_2\mu {\rm E_\mu}^{+}(c_2)}
, \\[2ex]
  b_1(\mu)=\tfrac{{\rm E_\mu}^{-}(c_2)+{\rm E_\mu}^{-}(1-c_2)-\mu {\rm E_\mu}^{+}(c_2)}{\mu^2 {\rm E_\mu}^{-}(c_1-c_2)}, \
  b_2(\mu)=\tfrac{\mu {\rm E_\mu}^{+}(c_1)-{\rm E_\mu}^{-}(c_1)-{\rm E_\mu}^{-}(1-c_1)}{\mu^2 {\rm E_\mu}^{-}(c_1-c_2)}.
\end{array}
\end{equation}
Next, we solve for the two order conditions \eqref{4ths2:a}-- \eqref{4ths2:b}.
Due to \eqref{eq:add:c} (see Corollary~\ref{corollary3.2} for $s=2$), we see that one only needs to satisfy one of them (as the other one will be then automatically satisfied).
For  instance, we solve \eqref{4ths2:a} by expanding $b_1(\mu)$ and $b_2(\mu)$ in  \eqref{coeffEFTDDIRK2s4} (with note that $\mu={\rm i} \omega h$) in Taylor series as
\begin{subequations}\label{Taylor_bi}
\begin{align}
  &b_1(\mu)=\tfrac{1-3c_2}{6(c_1-c_2)}+\tfrac{10(c_1-2c_2)(c_1-3c_1c_2)-20c_2^2+15c_2-3}{360(c_1-c_2)}\omega^2 h^2+\mathcal{O}(h^4) \label{b1mu}\\
  &b_2(\mu)=\tfrac{3c_1-1}{6(c_1-c_2)}+\tfrac{10(c_2-2c_1)(3c_1c_2-c_2)+20c_1^2-15c_1+3}{360(c_1-c_2)}\omega^2 h^2+\mathcal{O}(h^4)\label{b2mu}
\end{align}
\end{subequations}
(to get $b^{(0)}_1, b^{(0)}_2$), and thus obtain a constraint for $c_1$ and $c_2$:
\begin{equation}\label{Ord4Algebraic}
\dfrac{1-3c_2}{6(c_1-c_2)}c^2_1 + \dfrac{3c_1-1}{6(c_1-c_2)} c^2_2= \dfrac{1}{12}
\iff 2(c_1+c_2-3c_1c_2)-1=0
\end{equation}
for all $c_1 \ne c_2$.
Overall, this results in a family of fourth-order 2-stage methods which will be called
$\mathtt{EFTDDIRK2s4(c_1,c_2,\phi)}$.
For example, solving \eqref{Ord4Algebraic} with a choice of $c_1=1/4$ leads to $c_2=1$, denoted $\mathtt{EFTDDIRK2s4(\tfrac{1}{4},1,\phi)}$. Another solution is to choose $c_1=0$, resulting in $c_2=1/2$, and  $a_{11} (\mu)=0$ (the first stage is explicit), denoted $\mathtt{EFTDDIRK2s4(0,\tfrac{1}{2},\phi)}$.
The parameter $\phi$ will be determined by the optimizing the phase property of the methods. This will be discussed in the next section.

\subsection{Two--stage fifth-order methods}

In this subsection, we consider whether using $s=2$ is possible to derive  a  fifth-order method. For this, in addition to  \eqref{2ExpFittingCond}, the conditions  in \eqref{OrderFive} are required. Supposed that \eqref{d6} is satisfied, one derives $\bm{b}^{(2)} \cdot \bm{c}=0$ due to  \eqref{eq:add:c}  in Corollary~\ref{corollary3.2}. With this,  \eqref{d7} is now simplified to
\begin{equation}\label{eq:simplified}
\bm{b}^{(0)} \cdot (\bm{\xi}^{(2)} \text{*} \bm{c})=0 \iff b^{(0)}_1 \xi^{(2)}_1 c_1+  b^{(0)}_2  \xi^{(2)}_2 c_2 = 0.
\end{equation}
Next, using \eqref{eq:add:d} which can be written as $\bm{A}^{(0)}\bm{c} + \bm{\xi}^{(2)} \text{*} \bm{c}=  \tfrac{1}{3!}\bm{c}^3$, we have
$\bm{b}^{(0)}\cdot \big(\bm{A}^{(0)}\bm{c}\big)=\frac{1}{3!} \bm{b}^{(0)}\cdot \bm{c}^3 - \bm{b}^{(0)} \cdot (\bm{\xi}^{(2)} \text{*} \bm{c})=\tfrac{1}{3!} \frac{1}{20} - 0 = \tfrac{1}{120}$. This shows that
 \eqref{d5} is then automatically satisfied. Therefore, to fulfill  \eqref{OrderFive}, we eventually need to solve  \eqref{d6} and \eqref{eq:simplified} only. \\
Expanding $\xi_i(\mu)$ (see  \eqref{coeffEFTDDIRK2s4}) in Taylor series
\begin{equation}\label{Taylor_xi}
  \xi_1(\mu)=1+\tfrac{c_1^2}{3}\omega^2 h^2+\mathcal{O}(h^4), \
\xi_2(\mu)=1-\big(\phi-\tfrac{\phi c_1}{c_2}-\tfrac{c_2^2}{3} \big)\omega^2 h^2+\mathcal{O}(h^4)
\end{equation}
 to get $\xi^{(2)}_1, \ \xi^{(2)}_2$ and employing  \eqref{Taylor_bi}, the two conditions \eqref{d6} and \eqref{eq:simplified}  become
\begin{equation}\label{eq57}
\frac{1-3c_2}{6(c_1-c_2)}c^3_1 + \frac{3c_1-1}{6(c_1-c_2)} c^3_2= \frac{1}{20} \iff  c^2_1 +c^2_2+c_1c_2 -3c_1 c_2 (c_1 + c_2)=\frac{3}{10}
\end{equation}
(for all $c_1 \ne c_2$) and
$
\tfrac{1-3c_2}{6(c_1-c_2)}(\tfrac{c^3_1}{3}) + \tfrac{3c_1-1}{6(c_1-c_2)} (\phi-\frac{\phi c_1}{c_2}-\tfrac{c_2^2}{3})c_2  = 0,
$
respectively. Note that, with $c_1 \ne c_2$, the later equation can be simplified as
\begin{equation}\label{eq59}
c^2_1 +c^2_2 -3c_1 c_2 (c_1 + c_2) + c_1 c_2 + 3\phi (3c_1 -1)=0 \iff \phi=\frac{1}{10(1-3c_1)}
\end{equation}
by employing  \eqref{eq57}.
Clearly, $c_1$ and $c_2$ can be easily solved from the system of two algebraic equations  \eqref{Ord4Algebraic} (to fulfill  \eqref{2ExpFittingCond}) and  \eqref{eq57} (indeed, given the form of this system, $c_1$ and $c_2$ are the two roots of the quadratic equation
($10X^2-8X+ 1=0$). Then inserting them into \eqref{eq59} gives $\phi$. We display the results as follows
\begin{equation}\label{CoeffOrder5}
  c_1=\frac{1}{10}(4-\sqrt{6}), \quad c_2=\frac{1}{10}(4+\sqrt{6}), \quad \phi=\frac{1}{50}(2+3\sqrt{6}). \\
\end{equation}
This results in a 2-stage fifth-order method with the coefficients given in \eqref{coeffEFTDDIRK2s4} and \eqref{CoeffOrder5} which will be called
$\mathtt{EFTDDIRK2s5}$.

\subsection{Three--stage sixth-order method}
For a 3--stage method, the exponential fitting conditions using \eqref{eq:33} for this case ($s=3$) now gives
\begin{subequations}\label{3StageExpCond}
\begin{align}
&\mu^2a_{11}(\mu){\rm e}^{{\pm} c_1\mu}\pm \xi_1(\mu)c_1 \mu={\rm e}^{\pm c_1\mu}- 1 ,\label{3ExpFittingConda}\\
&\mu^2a_{21}(\mu){\rm e}^{{\pm} c_1\mu}+\mu^2a_{22}(\mu){\rm e}^{{\pm} c_2\mu}\pm \xi_2(\mu)c_2 \mu={\rm e}^{\pm c_2\mu}-1 ,\label{3ExpFittingCondb} \\
&\mu^2a_{31}(\mu){\rm e}^{{\pm} c_1\mu}+\mu^2a_{32}(\mu){\rm e}^{{\pm}
c_2\mu}+\mu^2a_{33}(\mu){\rm e}^{{\pm} c_3\mu}\pm \xi_3(\mu)c_3 \mu={\rm e}^{\pm c_3\mu}-1 ,\label{3ExpFittingCondc} \\
&\mu^2
b_{1}(\mu){\rm e}^{{\pm} c_1 \mu}+\mu^2
b_{2}(\mu){\rm e}^{{\pm} c_2 \mu}+\mu^2b_{3}(\mu){\rm e}^{{\pm} c_3 \mu}={\rm e}^{\pm \mu}-1 \mp \mu. \label{3ExpFittingCondd}
\end{align}
\end{subequations}
In addition to \eqref{3StageExpCond}, we require the coefficients to satisfy the classical order condition \eqref{d1}--\eqref{OrderSix}. As in the two-stage method, one can similarly solve \eqref{3ExpFittingConda} for $a_{11}(\mu)$ and $\xi_1(\mu)$. Next, we solve  parameters \eqref{3ExpFittingCondb} for $\xi_2(\mu)$ and $a_{22}(\mu)$, while we make $a_{21}(\mu)=\chi$ a free parameter. Furthermore, we solve \eqref{3ExpFittingCondc} for $\xi_3(\mu)$ and $a_{33}(\mu)$, while setting $a_{31}(\mu)=\beta$ and $a_{32}(\mu)=\delta$ as free parameters. Lastly, setting $b_2(\mu)=\eta$ as a free parameter in \eqref{3ExpFittingCondd}, we solve  for $b_1(\mu)$ and $b_3(\mu)$. The solution to \eqref{3StageExpCond} therefore yields the following:
\begin{equation*}
\begin{array}{l}
  a_{11}(\mu)=\tfrac{1}{\mu^2}\left(1-\tfrac{2}{{\rm E_\mu}^{+}(c_1)} \right), a_{21}(\mu)=\chi, \
  a_{22}(\mu)=\tfrac{{\rm E_\mu}^{+}(c_2)-(2+\chi \mu^2{\rm E_\mu}^{+}(c_1))}{\mu^2{\rm E_\mu}^{+}(c_2)},
\end{array} \hspace{2.5in}
\end{equation*}
\begin{equation*}
\begin{array}{l}
a_{33}(\mu)=\tfrac{{\rm E_\mu}^{+}(c_3)-\mu^2(\beta {\rm E_\mu}^{+}(c_1)+\delta {\rm E_\mu}^{+}(c_2))-2}{\mu^2 {\rm E_\mu}^{+}(c_3)}, \qquad \xi_1(\mu)=\tfrac{{\rm E_\mu}^{-}(c_1)}{c_1\mu {\rm E_\mu}^{+}(c_1)},
\end{array}\hspace{2in}
\end{equation*}
\begin{equation*}
\begin{array}{l}
   \xi_2(\mu)=\tfrac{{\rm E_\mu}^{-}(c_2)-\chi \mu^2 {\rm E_\mu}^{-}(c_1-c_2)}{c_2\mu {\rm E_\mu}^{+}(c_2)}, \qquad \xi_3(\mu)= \tfrac{{\rm E_\mu}^{-}(c_3)-\mu^2(\beta {\rm E_\mu}^{-}(c_1-c_3)+\delta {\rm E_\mu}^{-}(c_2-c_3))}{c_3\mu {\rm E_\mu}^{+}(c_3)},
\end{array}\hspace{2in}
\end{equation*}
\begin{equation*}
\begin{array}{l}
  b_1(\mu)=\tfrac{{\rm E_\mu}^{-}(c_3)+{\rm E_\mu}^{-}(1-c_3)-\mu {\rm E_\mu}^{+}(c_3)-\eta \mu^2 {\rm E_\mu}^{-}(c_2-c_3)}{\mu^2 {\rm E_\mu}^{-}(c_1-c_3)},
\end{array}\hspace{2in}
\end{equation*}
\begin{equation*}
\begin{array}{l}
  b_3(\mu)=\tfrac{\mu {\rm E_\mu}^{+}(c_1)-{\rm E_\mu}^{-}(c_1)-{\rm E_\mu}^{-}(1-c_1)-\eta \mu^2 {\rm E_\mu}^{-}(c_1-c_2)}{\mu^2 {\rm E_\mu}^{-}(c_1-c_3)}.
\end{array}\hspace{2in}
\end{equation*}
Now that we have the solution, we obtain the Taylors expansion of the coefficients and seek the free parameters in order to satisfy classical sixth-order
conditions \eqref{d1}--\eqref{OrderSix}. With the help of  Corollary~\ref{corollary3.2}, the classical order conditions \eqref{d3}, \eqref{d6}, \eqref{d7}, \eqref{d8}, \eqref{d9}, and \eqref{d10} are sufficient to attain order six. These set of conditions, yield a system of cumbersome algebraic equations, which are omitted here. The
free parameters satisfy the sixth-order conditions with
$$
\begin{array}{c}
  c_1=0, \qquad c_2=\dfrac{1}{10}(5-\sqrt{5}), \qquad
c_3=\dfrac{1}{10}(5+\sqrt{5}), \qquad
\chi=\dfrac{1}{30}(3-\sqrt{5}), \\[2ex]
\beta=\dfrac{1}{60}(1+\sqrt{5}), \qquad
\delta=\dfrac{1}{60}(5+3\sqrt{5}), \qquad
\eta=\dfrac{1}{24}(5+\sqrt{5}).
\end{array}
$$
This method is denoted as $\mathtt{EFTDDIRK3s6}$.

\begin{myrem}
If the frequency $\omega$ of the problem is close to 0 (so does  $\mu = {\rm i} \omega h$), for practical computation,  it is then preferable to compute the coefficients of our EFTDDIRK methods based on their truncated Taylors series. We note, however, that this is not the case for our numerical examples presented in Section~\ref{experiments}.
\end{myrem}

\section{Phase and stability properties}\label{StabilityPhase}
This section is concerned with the stability and phase-lag analysis
of the EFTDDIRK methods derived in Section \ref{Construct}. Following \cite{FangYouMing2014,Vyver2005} for oscillatory systems, we
apply the method \eqref{modTDDIRK} to the test equation
\begin{equation}\label{testeq}y'={\rm i}\Lambda y, \quad
{\rm i}^2=-1, \quad  \Lambda>0.
\end{equation}
This results in the following difference equation
\begin{equation}\label{recursivey}
y_{n+1}=R(\theta,\omega h)y_n, \quad \theta=\Lambda h,
\end{equation}
where $R(\theta,\omega h)$ is the imaginary stability function of $\theta$ and $\omega h$ given as
\begin{equation}\label{Stabfun}
   R(\theta,\omega h)=\big(1-\theta^2 \bm{b}(\mu)^T(I_s+\theta^2\bm{A}(\mu))^{-1}\bm{e}) \big)+{\rm i} \big(\theta(1-\theta^2)\bm{b}(\mu)^T(I_s+\theta^2\bm{A}(\mu))^{-1}(\bm{\xi}*\bm{c})
    \big)
\end{equation}
(here, $\mu={\rm i}\omega h$, $I_s$ is the $s \times s$ identity matrix).
\subsection{Phase properties}
The dispersion and dissipation are important properties which characterize the numerical behavior of methods constructed for
oscillatory problems.  Similarly to \cite{Vyver2005,FangYouMing2014},  they can be defined for our proposed EFTDDIRK methods as follows.

\begin{mydef}[Dispersion and dissipation]
With the stability function $R(\theta,\omega h)$ given in~\eqref{Stabfun}, the quantities
\begin{equation}\label{dispdis}
{\rm Disp}(\theta)=\theta-{\rm arg}(R(\theta,\omega h)) \ \text{and} \ \
{\rm Dis}(\theta)=1-| R(\theta,\omega h)|
\end{equation}
are called the dispersion (phase-lag) and the
dissipation (amplification error), respectively.
The scheme \eqref{modTDDIRK} is dispersive of order $p$ and is dissipative of order $q$ if
$$
{\rm Disp}(\theta)=C_{p+1}(r)\theta^{p+1}+\mathcal{O}(\theta^{p+3}), \quad {\rm Dis}(\theta)=C_{q+1}(r)\theta^{q+1}+\mathcal{O}(\theta^{q+3}),
$$
respectively (here $r=\frac{\omega h}{\theta}$).  In the case ${\rm Disp}(\theta)= 0$ or ${\rm Dis}(\theta)=0$, it is called zero-dispersive or
zero-dissipative, respectively.
\end{mydef}
Using \eqref{dispdis}, in Table \ref{PLETable} we derive the dispersion and dissipation for the EFTDDIRK methods constructed in Section~\ref{Construct}.
\setlength{\extrarowheight}{3 pt}
\renewcommand{\arraystretch}{1.5}
\begin{table}[H]\small
  \centering
  \begin{tabular}{|l|l|}
    \hline
    Method  & Dispersion ${\rm Disp}(\theta)$ \\
            & Dissipation ${\rm Dis}(\theta)$\\
    \hline
    $\mathtt{EFTDDIRK2s4(\tfrac{1}{4},1,\phi)}$  & $\tfrac{1}{480}(-11+20\phi)(1-r^2)\theta^5+ \mathcal{O}(\theta^7)$\\
 & $\tfrac{1}{23040}(-230+360\phi-7r^2)(1-r^2)\theta^6+ \mathcal{O}(\theta^8)$
\\
\hline
    $\mathtt{EFTDDIRK2s4(0,\tfrac{1}{2},\phi)}$  &
$\tfrac{1}{240}(-3+40\phi)(1-r^2)\theta^5+ \mathcal{O}(\theta^7)$\\
                 & $\tfrac{1}{5760}(-50+720\phi-r^2)(1-r^2)\theta^6+ \mathcal{O}(\theta^8)$  \\
    \hline
    $\mathtt{EFTDDIRK2s5}$ & $\tfrac{(1-r^2) \big((168 \sqrt{6}-379) r^2+84 \sqrt{6}-162\big)}{252000}\theta^7+ \mathcal{O}(\theta^9)$\\
 & $\tfrac{ \left(r^4+r^2-2\right)}{14400}\theta^6+ \mathcal{O}(\theta^8)$
\\
    \hline
    $\mathtt{EFTDDIRK3s6}$ & $\tfrac{(1-r^2) \big((17 \sqrt{5}-10) r^2-4 \sqrt{5}-10\big)}{3780 (5+\sqrt{5})^3}\theta^7+ \mathcal{O}(\theta^9)$\\
& $\tfrac{ (1-r^2) \big((15+\sqrt{5})
r^4+175 (5 \sqrt{5}-9) r^2-175 (1+3
\sqrt{5})\big)}{15120000 (3+\sqrt{5})}\theta^8+ \mathcal{O}(\theta^{10})$\\
    \hline
  \end{tabular}
 \caption{Dispersion and dissipation of the newly derived EFTDDIRK methods.}\label{PLETable}
\end{table}

In view of Table \ref{PLETable}, it is easy to see that by choosing
$\phi=\tfrac{11}{20}$ and $\phi=\tfrac{3}{40}$, the phase-lag for $\mathtt{EFTDDIRK2s4(\tfrac{1}{4},1,\phi)}$ and
$\mathtt{EFTDDIRK2s4(0,\tfrac{1}{2},\phi)}$ is optimized and increased to order six, respectively. In Section~\ref{experiments}, we demonstrate the efficiency of these optimized methods over non-optimized phase-lag methods (which we simply take $\phi=0$).

\subsection{Region of imaginary stability}
One can also study the imaginary stability region of  the proposed  EFTDDIRK methods similarly to \cite{Vyver2005,FangYouMing2014}.
\begin{mydef}[Imaginary stability region]
The region of imaginary stability $\mathcal{S}$ of  the EFTDDIRK methods  \eqref{modTDDIRK}  is given by
$$
\mathcal{S}=\{(\theta, \omega h)\ |\  \theta>0, \omega >0, \
|R(\theta,\omega h)|\leq1\}.
\vspace{-8pt}
$$
\end{mydef}
\vspace{-8pt}
\begin{figure}[ht!]
\centering
$$
\begin{array}{ccc}
  \mathtt{EFTDDIRK2s4(\tfrac{1}{4},1,0)} & \mathtt{EFTDDIRK2s4(\tfrac{1}{4},1,\tfrac{11}{20})} & \mathtt{EFTDDIRK2s4(0,\tfrac{1}{2},0)} \\
  \includegraphics[width=4cm,height=3.5cm]{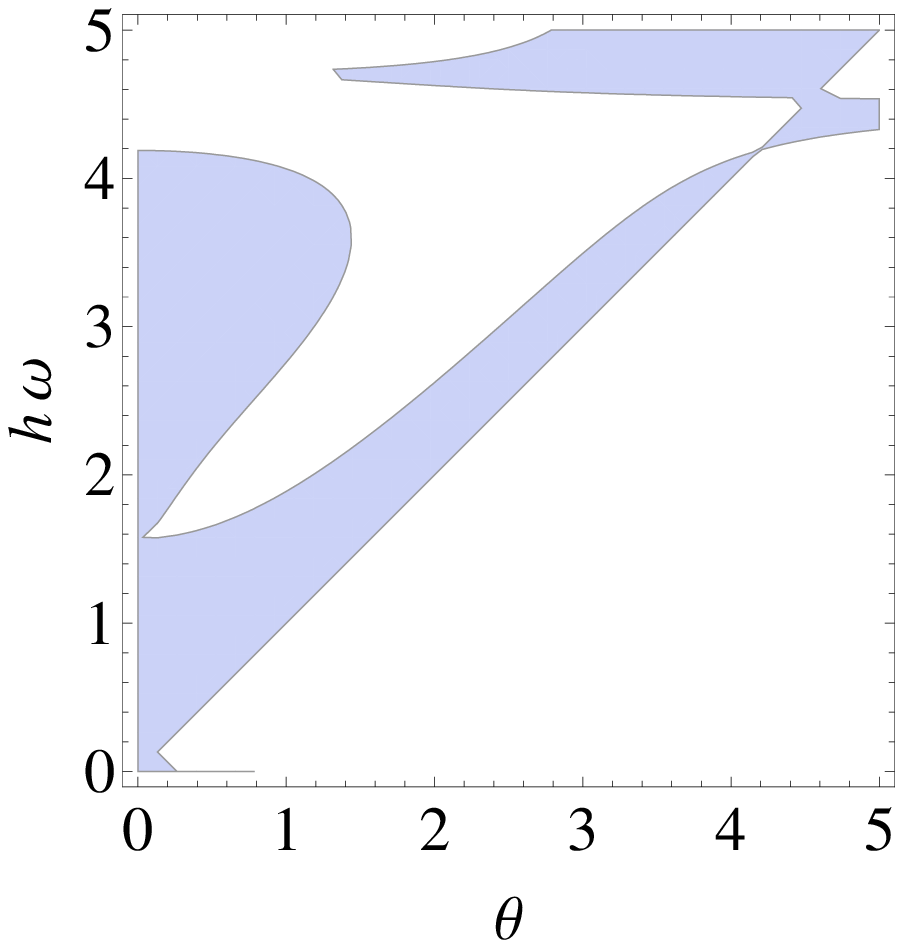} & \includegraphics[width=4cm,height=3.5cm]{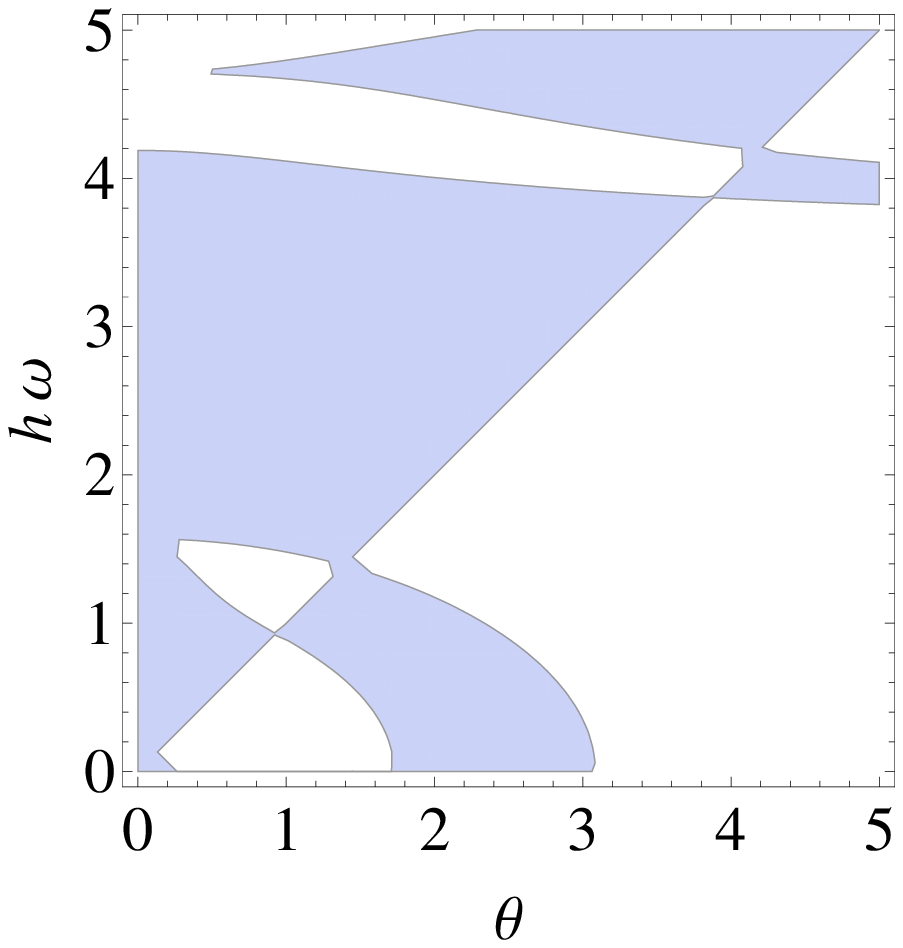} & \includegraphics[width=4cm,height=3.5cm]{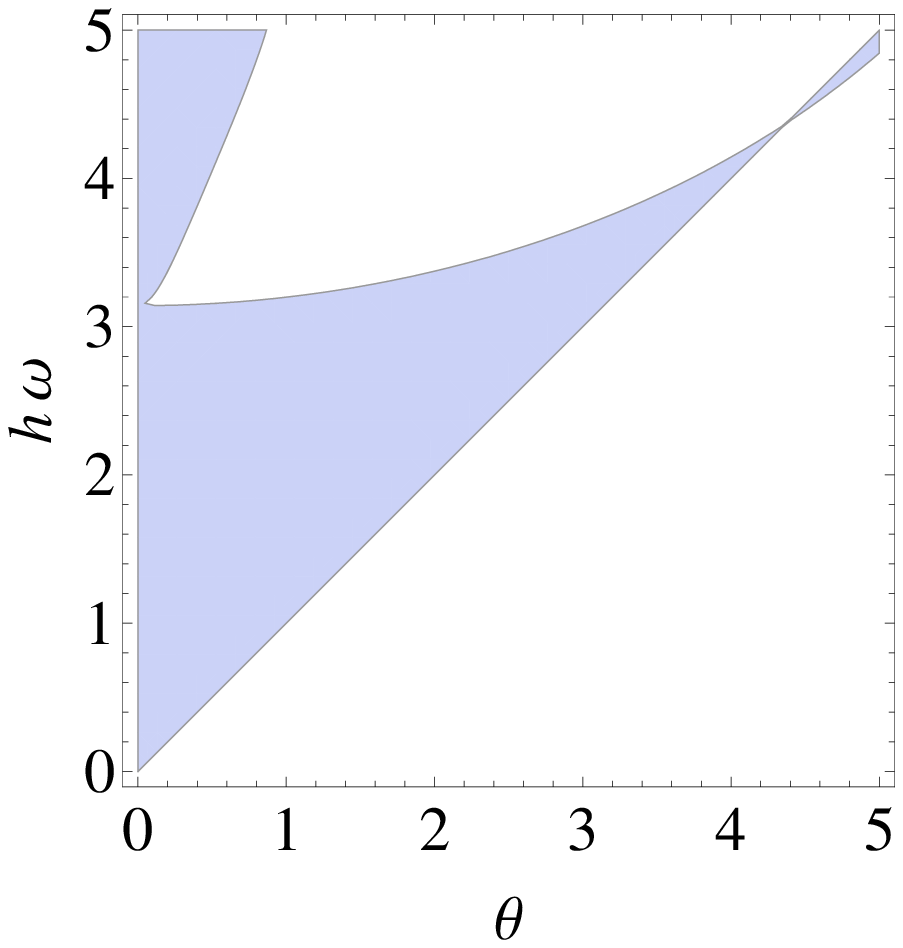} \\
  \mathtt{EFTDDIRK2s4(0,\tfrac{1}{2},\tfrac{3}{40})} & \mathtt{EFTDDIRK2s5} & \mathtt{EFTDDIRK3s6}\\
  \includegraphics[width=4cm,height=3.5cm]{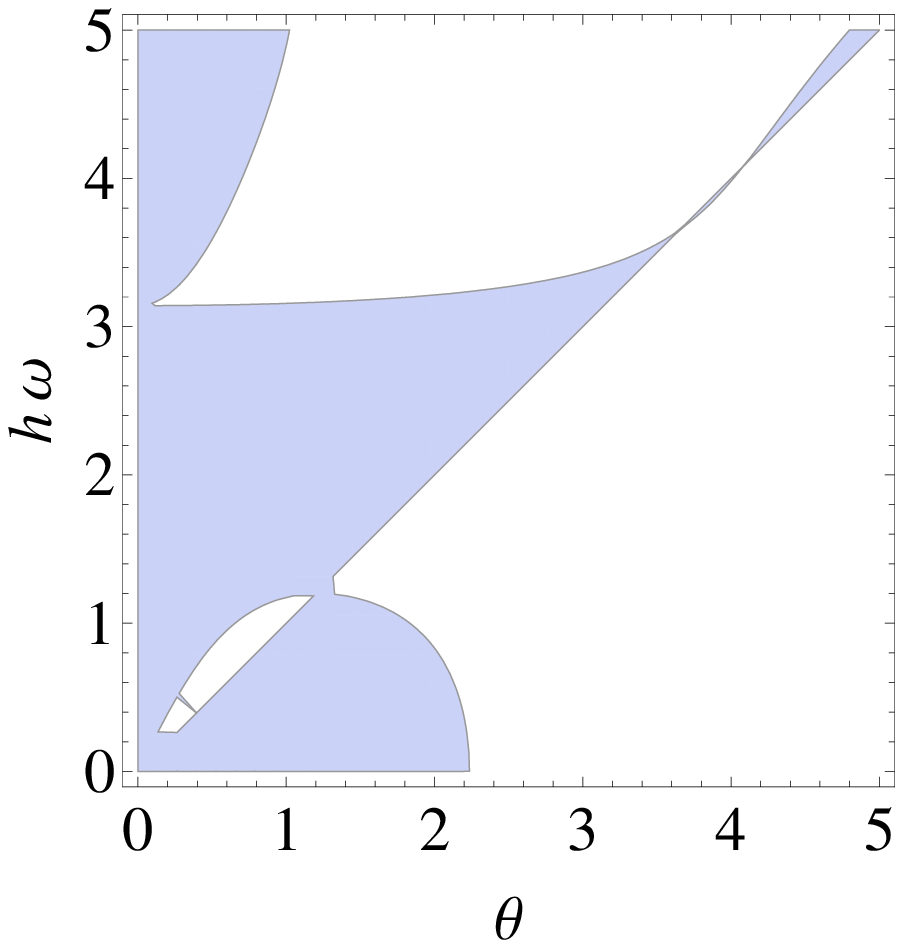} & \includegraphics[width=4cm,height=3.5cm]{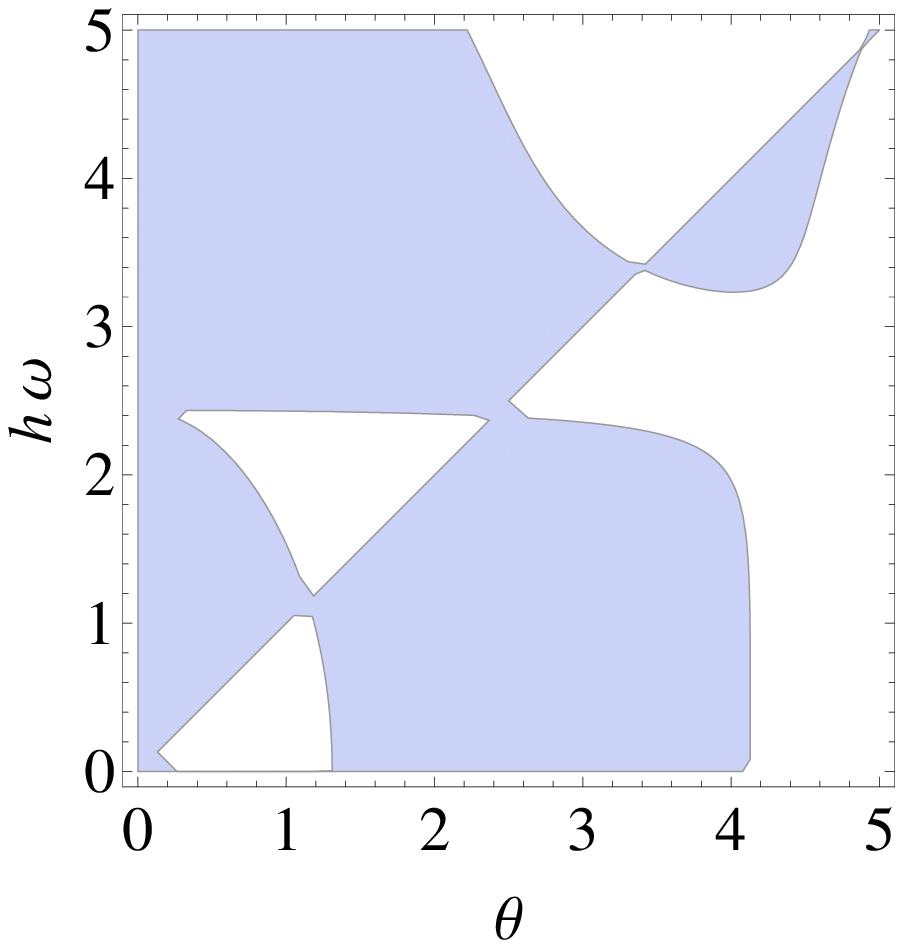} & \includegraphics[width=4cm,height=3.5cm]{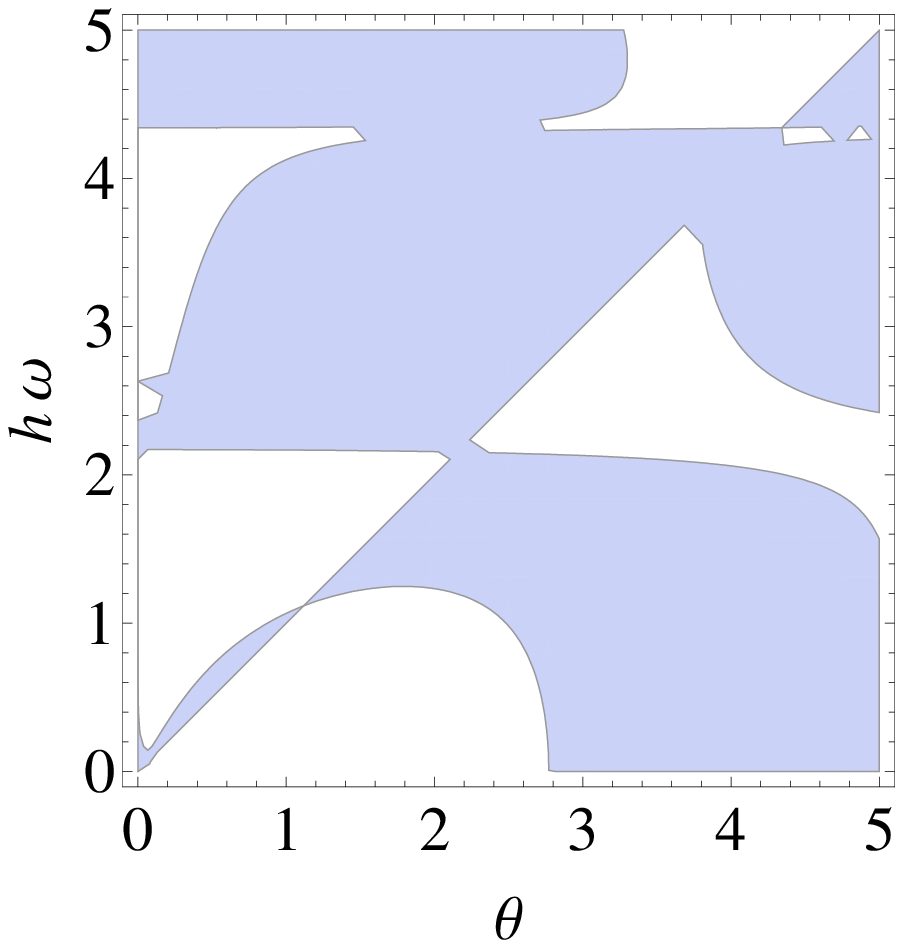}
  \vspace{-8pt}
\end{array}
$$
\vspace{-8pt}
\caption{Imaginary stability plots for the newly derived EFTDDIRK methods}\label{FigStabEFTDDIRK}
\end{figure}
In Figure~\ref{FigStabEFTDDIRK}, we plot the imaginary stability regions in the $\theta-\omega h$ plane on $[0, 5]^2$ (see the shaded regions) of the newly constructed EFTDDIRK methods. For a fixed value of $\omega h$, one can determine a sequence of the imaginary stability intervals for each EFTDDIRK method (by finding the intersection of the horizontal line passing through $\omega h$ crossing the shaded region). It is observed from Figure \ref{FigStabEFTDDIRK} that the optimized methods $\mathtt{EFTDDIRK2s4(\tfrac{1}{4},1,\tfrac{11}{20})}$ and $\mathtt{EFTDDIRK2s4(0,\tfrac{1}{2},\tfrac{3}{40})}$ have larger imaginary stability regions than their non-optimized phase-lag counterparts $\mathtt{EFTDDIRK2s4(\tfrac{1}{4},1,0)}$ and $\mathtt{EFTDDIRK2s4(0,\tfrac{1}{2},0)}$, respectively. The sixth-order method $\mathtt{EFTDDIRK3s6}$ has the largest stability region. In fact, it has the largest sequence of imaginary stability intervals even for larger values of $\omega h$.

\section{A note on frequency estimation}\label{FreqDetermine}

In view of the constructed EFTDDIRK schemes, it is crucial to determine the principal frequency $\omega$ (in turn $\mu={\rm i}\omega h$) for their implementation. This was a challenging aspect of the numerical integration of initial value problems with
exponentially/trigonometrically based methods, especially when the frequency is not known in advance. In \cite{IxaruBergheMeyer2002}, a strategy was derived for estimating the frequency of the system based on the leading term of the local truncation error. This approach has been extended and explored in \cite{Vyver2005}. Another approach was discussed in \cite{RamosVigo2010,VigoRamos2015} to obtain the optimum frequency ($\omega_{opt}$) as a result of minimizing the total energy of nonlinear periodic oscillators.
In this work, we apply the strategy presented in \cite{VigoRamos2015} for the problem where the fitting frequency is not given. In particular, the \emph{golden section search} technique \cite{Press2007} is utilized to obtain the optimum frequency ($\omega_{opt}$) based on minimizing the
error of the method for a given interval around the angular frequency.

\section{Numerical experiments}\label{experiments}

In this section, we evaluate the effectiveness of the newly constructed EFTDDIRK methods of orders 4, 5, and 6 when compared to existing implicit methods of the same orders in the literature. Our numerical experiments are carried out on a list of three oscillatory test problems (see below) and implementations are performed in MATLAB on a single workstation using a 8GB RAM processor Intel(R) Core(TM) i5-8250U CPU @ 1.80GHz Laptop.  Numerical investigation include accuracy and efficiency comparisons. For accuracy comparisons, all methods use the same set of stepsizes. However, for efficiency comparisons,  
 the stepsizes are chosen such that all the considered methods achieve the same error thresholds (measured based on the maximum global error ($\log_{10}(MGE)$)). When the exact solution is unknown, the reference solution is computed by using the sixth-order method $\mathtt{EFTDDIRK3s6}$ with sufficient small stepsize.

\subsection*{Computation of the internal stages.}

The internal stages  $Y_i\approx y(x_n+c_ih)$, $i=1,2,\ldots, s$ of our EFTDDIRK methods are sequentially computed by using the  fixed point iteration technique, which is given as
\begin{equation}\label{InternalStageComputation}
\begin{array}{l}
Y_i^{(0)}=y_n+hc_if_n+\dfrac{(c_ih)^2}{2}g_n,\\
Y_i^{(r+1)}=y_n+h\xi_i(\mu)c_if_n+h^2\Big(\sum\limits_{j=1}^{i-1}a_{ij}(\mu)f(Y_j)+a_{ii}(\mu)f(Y_i^{(r)})\Big).
\end{array}
\end{equation}
The stoping criterion for the iterative procedure \eqref{InternalStageComputation} is
$$
\|Y_i^{(r+1)} -Y_i^{(r)} \|_2 < tol = 10^{-12}, \quad r=0,1,2,\ldots
$$
where $Y_i^{(r)}$ is the value at the $r$th iteration in the iterative process.

For the readers' convenience, we list the evaluated methods in two groups as follows.

\medskip
\noindent {\bf Methods of Order 4}
\begin{itemize}
    \item $\mathtt{TDFIRK2s4}$: 2-stage two-derivative implicit RK method \cite{ChanWangTsai2012}.
    \item $\mathtt{TDDIRK2s4}$: 2-stage two-derivative DIRK method \cite{AhmadSenuIbrahim2019}.
    \item $\mathtt{EFSSDIRK3s4}$ 3-stage symmetric and symplectic DIRK method \cite{EhigieDiaoZhangFangHouYou2017}.
    \item $\mathtt{TFTDDIRK2s4}$ 2-stage EFTDDIRK method of order 4 \cite{AhmadSenuIbrahimOthman2019}
    \item $\mathtt{EFTDDIRK2s4(c_1,c_2,\phi)}$: new 2-stage EFTDDIRK method of order 4.
\end{itemize}
\medskip
{\bf Methods of Order 5 and 6}
\begin{itemize}

    \item $\mathtt{TDFIRK3s5}$: 3-stage implicit two-derivative RK method of order 5 \cite{ChanWangTsai2012}.
    \item $\mathtt{Gauss3}$: Gauss 3-stage implicit RK method of order 6  \cite{Butcher2016}.
    \item $\mathtt{TDDIRK3s5}$: 3-stage two-derivative DIRK method of
    order 5 \cite{AhmadSenuIbrahim2019}.
    \item $\mathtt{EFTDDIRK2s5}$: new 2-stage EFTDDIRK method of order 5.
    \item $\mathtt{TDDIRK4s6}$: 4-stage two-derivative DIRK method of
    order 6 \cite{AhmadSenuIbrahim2019}.
    \item $\mathtt{EFSSIRK3s6a}$: new 3-stage symmetric and symplectic implicit Runge-Kutta method of order 6 \cite{CalvoFrancoMontijanoRandez2008}.
    \item $\mathtt{EFSSIRK3s6b}$: new 3-stage symmetric and symplectic implicit Runge-Kutta method of order 6 \cite{CalvoFrancoMontijanoRandez2009}
    \item $\mathtt{EFTDDIRK3s6}$: new 3-stage EFTDDIRK method of order 6.
\end{itemize}

\medskip
\medskip
\noindent {\bf Example~1 (perturbed Kepler's problem).}
Consider the Hamiltonian system studied in \cite{FrancoGomez2014}
\begin{equation}\label{PbH}
    H(p,q)=\dfrac{1}{2}(p_1^2+p_2^2)+\dfrac{\omega^2}{2}(q_1^2+q_2^2)+\dfrac{\alpha}{6}(q_1^2+q_2^2)^3,
\end{equation}
with the initial data
$$
q_1(0)=1, \quad q_2(0)=0, \quad p_1(0)=0, \quad
p_2(0)=\omega+\epsilon,
$$
where $\alpha=\epsilon(2\omega+\epsilon)$. The analytic solution
is given by
$$
\begin{array}{l}
  q_1(t)=\cos((\omega+\epsilon)t), \quad
p_1(t)=-(\omega+\epsilon)\sin((\omega+\epsilon)t),\\
  q_2(t)=\sin((\omega+\epsilon)t), \quad \
p_2(t)=(\omega+\epsilon)\cos((\omega+\epsilon)t),
\end{array}
$$
which presents oscillations (see Figure~\ref{FigProb1Time}). In this experiment, we have chosen the parameter
values $\epsilon=10^{-2}$, $\omega=5$, and the integration is
carried out on the interval $[0,100]$. For accuracy comparisons, the same set of stepsizes
$\{h=\frac{1}{2^i}$, $j=3,4,5,6\}$ is used for all the considered integrators. The numerical results
are presented in Figures \ref{FigProb14} and \ref{FigProb156}, which also show the efficiency plots (the step sizes are chosen in such a way that the same error thresholds are achieved).
As seen from the left diagrams of these figures, all the new methods fully achieve their orders of convergence (4, 5, and 6).
\begin{figure}[H]
  \centering
  \includegraphics[width=9cm]{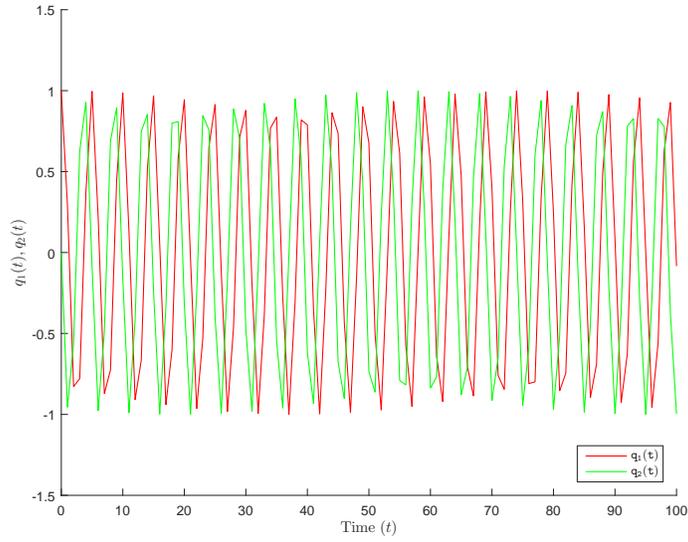}\\
  \caption{Solution components $q_1(t)$ and $q_2(t)$ of Example~1 showing oscillatory behavior.}
  \label{FigProb1Time}
\end{figure}

\begin{figure}[H]
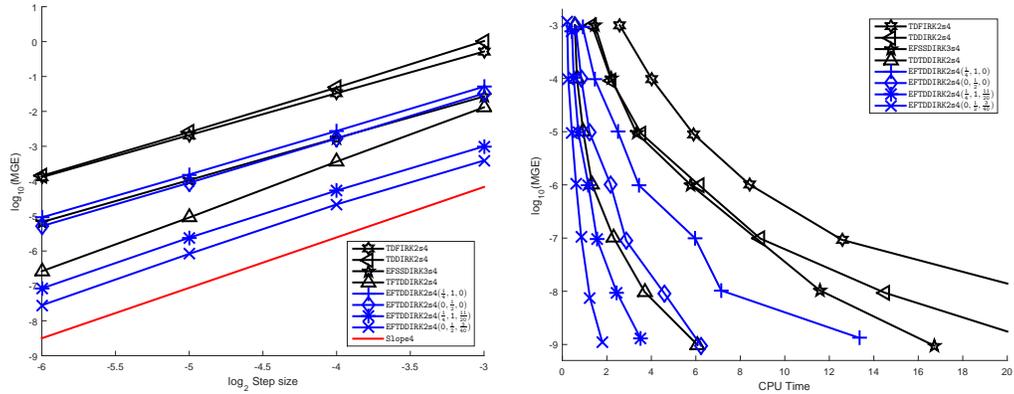

\centering
\begin{tabular}{cc}
\epsfig{file=NLProb1Acc4.eps,width=0.47\linewidth,clip=}
\hspace{2mm}
\epsfig{file=NLProb1CPU4.eps,width=0.47\linewidth,clip=}
\end{tabular}
\caption{Accuracy (left) and efficiency (right) plots of 4th-order methods for Example~1. For comparison, a straight line with slope 4 is added.}\label{FigProb14}
\end{figure}
\begin{figure}[H]
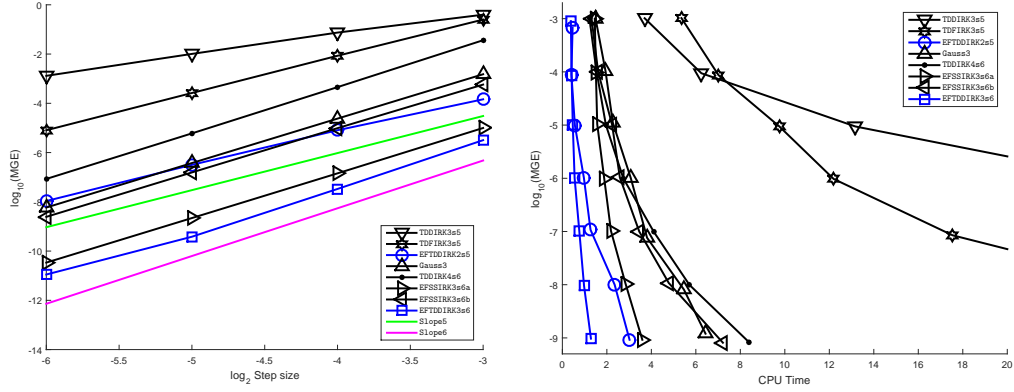

\centering
\begin{tabular}{cc}
\epsfig{file=NLProb1Acc56.eps,width=0.47\linewidth,clip=}
\hspace{2mm}
\epsfig{file=NLProb1CPU56.eps,width=0.47\linewidth,clip=}
\end{tabular}
\caption{Accuracy (left) and efficiency (right) plots of 5th and 6th-order methods for Example~1. For comparison, straight lines with slopes 5 and 6 are added.}\label{FigProb156}
\end{figure}

It can be seen from Figure \ref{FigProb14} that the methods whose dispersion were optimized
($\mathtt{EFTDDIRK2s4(\tfrac{1}{4},1,\tfrac{11}{20})}$ and
$\mathtt{EFTDDIRK2s4(0,\tfrac{1}{2},\tfrac{3}{40})}$) clearly outperform the other
methods of order 4.
Figure \ref{FigProb156} shows (right) that the newly derived methods $\mathtt{EFTDDIRK2s5}$
(order 5) and $\mathtt{EFTDDIRK3s6}$ (order 6) derived in this work are much more accurate and faster when compared to the considered existing methods of order 5 and 6. \\

\noindent \textbf{Example~2 (Fermi-Pasta-Ulam problem)}.
Next, we consider the highly oscillatory Fermi-Pasta-Ulam (FPU) problem (see \cite{HairerLubichWanner2006})  including $m$ stiff springs, in which  the motion is described by a second-order system of differential equations of the form
\begin{equation}\label{FermiPasta}
  \ddot{x}(t)+\Omega^2 x(t)=-\nabla U(x(t)), t\in [t_0,t_{end}],
\end{equation}
where
\begin{equation*}
\Omega=\left[ \begin{array}{ccc}
  {\bf 0}_{m \times m} & {\bf 0}_{m \times m} \\
  {\bf 0}_{m \times m} & \omega {\bf I}_{m \times m}
\end{array} \right]
\qquad  (\text{with} \ \omega \gg 1),
\end{equation*}
and $U(x)$ is a smooth nonlinear potential function given by
\begin{equation*}
U(x)=\frac{1}{4}\big[(x_{1}-x_{m+1})^4+ \sum_{j=1}^{m-1} (x_{j+1}-x_{m+j-1}-x_{j}-x_{m+j})^4+(x_{m}+x_{2m})^4  \big]
\end{equation*}
with $x_{j}=x_j (t)$ represents for positions of the $j$th stiff spring.
As in \cite{HairerLubichWanner2006}, we consider the case for $m=3$,
and choose
$$
x_1(0)=1, \quad \dot{x}_1(0)=1, \quad x_4(0)=\omega^{-1}, \quad
\dot{x}_4(0)=1,
$$
and zero for the remaining initial values. The
system is integrated on $[0,100]$ with $\omega=50$.

Note that the FPU problem \eqref{FermiPasta} can be also
 described by the Hamiltonian system with total energy
 \begin{equation}\label{HamFerm}
 H(x,\dot{x})=\dfrac{1}{2}\|\dot{x}\|^2+\dfrac{1}{2}\|\Omega x\|^2+U(x),
\end{equation}
where $x$ and $\dot{x}$ expresses the scaled displacements and velocities (or
momenta), respectively. Therefore, the exact value of the total energy is
$H(x_0,\dot{x}_0)$\\$=\tfrac{1}{2}(\sqrt{2})^2+\tfrac{1}{2}(1^2)+(0.501200080)=2.001200080$.

While the left diagrams of Figures \ref{FigProb24} and \ref{FigProb256} show the accuracy comparisons (using the same set of step sizes $\{h=1/2^j$, $j=6,7,8,9\}$ for each method),
the right diagrams display  the efficiency evaluations, in which the step sizes are chosen in such a way that the same error thresholds are achieved.

\begin{figure}[H]
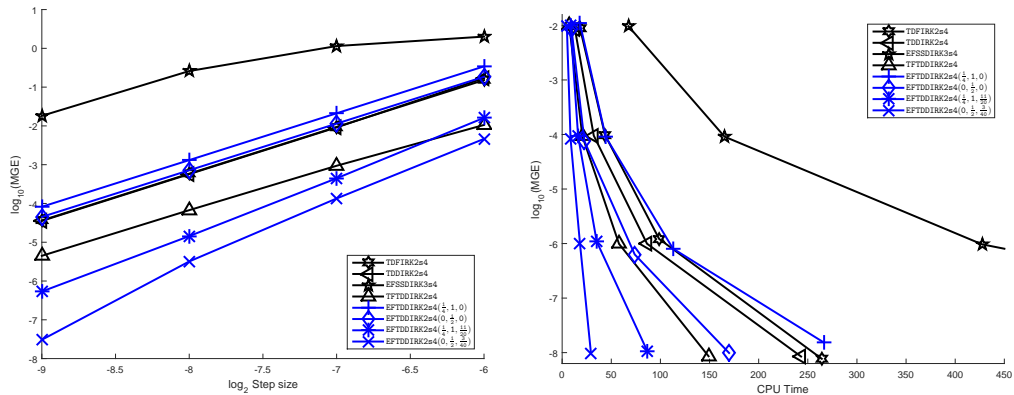

\centering
\begin{tabular}{cc}
\epsfig{file=NLProb2Acc4.eps,width=0.47\linewidth,clip=}
\hspace{2mm}
\epsfig{file=NLProb2CPU4.eps,width=0.47\linewidth,clip=}
\end{tabular}
\caption{Accuracy (left) and efficiency (right) plots of 4th-order methods for Example~2.}\label{FigProb24}
\end{figure}
\begin{figure}[H]
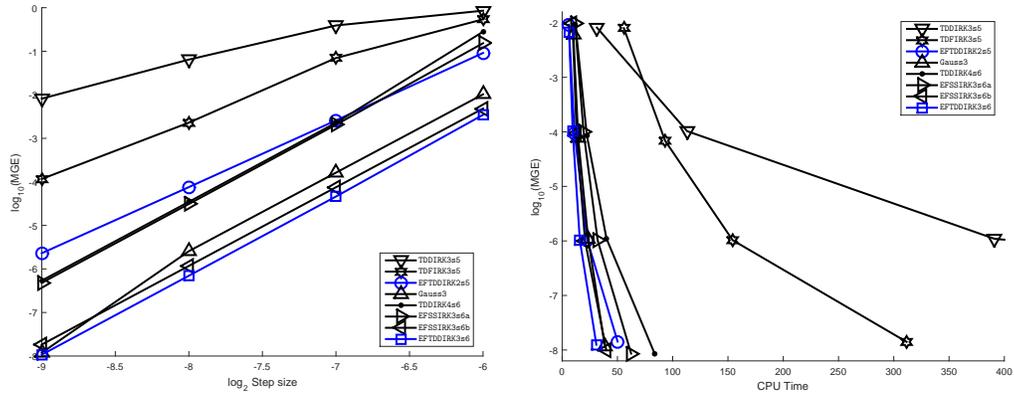

\centering
\begin{tabular}{cc}
\epsfig{file=NLProb2Acc56.eps,width=0.47\linewidth,clip=}
\hspace{2mm}
\epsfig{file=NLProb2CPU56.eps,width=0.47\linewidth,clip=}
\end{tabular}
\caption{Accuracy (left) and efficiency (right) plots of 5th and 6th-order methods for Example~2.}\label{FigProb256}
\end{figure}
Among methods of order 4, one can see again that the new method
$\mathtt{EFTDDIRK2s4(0,\tfrac{1}{2},\tfrac{3}{40})}$ is the most accurate and efficient.
 Also, the newly derived fifth- and sixth- order methods $\mathtt{EFTDDIRK2s5}$
 and $\mathtt{EFTDDIRK3s6}$ are more accurate and efficient compared to some existing methods of the same order, respectively.

Next, we investigate the preservation of the Hamiltonian for the FPU system by some selected methods of orders 4, 5, and 6. Figure~\ref{HamiltonianError} presents the absolute error of the Hamiltonian ($|H_N-H_0|$) versus time using stepsize $h=\tfrac{1}{200}$, where $H_N$ is the computed Hamiltonian after $N$ steps.
\begin{figure}[H]
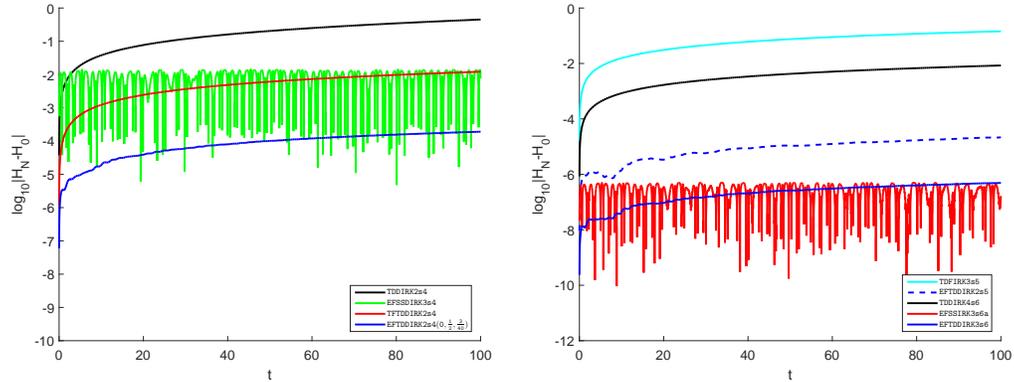

\centering
\begin{tabular}{cc}
\epsfig{file=NLProb2EvoErrHam4.eps,width=0.47\linewidth,clip=}
\hspace{2mm}
\epsfig{file=NLProb2EvoErrHam56.eps,width=0.47\linewidth,clip=}
\end{tabular}
\caption{Hamiltonian errors of 4th-order (left), 5th- and 6th-order (right) methods for Example~2.}\label{HamiltonianError}
\end{figure}
It is observed that the most accurate methods of the new derived fourth-order methods $\mathtt{EFTDDIRK2s4(0,\tfrac{1}{2},\tfrac{3}{40}})$ preserves the Hamiltonian best. However, the newly derived fifth- and sixth-order methods $\mathtt{EFTDDIRK2s5}$ and
 $\mathtt{EFTDDIRK3s6}$ preserve the total energy much better than the existing methods of the same order, respectively.
\newline
\newline
\noindent \textbf{Example~3 (Sine-Gordon equation)}.
We consider the sine-Gordon nonlinear  equation with periodic boundary
condition (see \cite{Weinberger1965})
\begin{equation}\label{eq:sinegordon}
\left\{ \begin{array}{l}
  \dfrac{\partial^2 u}{\partial t^2}=\dfrac{\partial^2
u}{\partial x^2}-\sin u, \qquad -1<x<1, \quad t>0 \\[2ex]
  u(-1,t)=u(1,t). \\
\end{array} \right.
\end{equation}
A semi-discretization in the spatial variable by the second-order centered finite difference method leads to the following system of ODEs
\begin{equation}\label{DiscretizedSineGordon}
    \dfrac{d^2U}{dt^2}+M U=F(U), \qquad 0<t \leq t_{end},
\end{equation}
where $U(t)=(u_1(t),\ldots u_N(t))^T$ with $u_i (t)=u(x_i,t)$,
$i=1,\ldots,N$. Equation \eqref{DiscretizedSineGordon} can be
further transformed to a system of first order DEs given by
\begin{equation}\label{FirstOrderDiscretizedSineGordon}
    \dfrac{d}{dt}\left[%
\begin{array}{c}
  U \\
  V \\
\end{array}%
\right]=\left[%
\begin{array}{cc}
  {\bf 0} & I \\
  -M & {\bf 0} \\
\end{array}%
\right] \left(%
\begin{array}{c}
  U \\
  V \\
\end{array}%
\right)+\left[%
\begin{array}{c}
   \bar{0} \\
  F(U) \\
\end{array}%
\right], \qquad 0<t \leq t_{end}.
\end{equation}
Here, $V=U'$, $I$ is the $N \times N$ identity matrix, {\bf 0} is the $N \times N$
zero matrix, $\bar{0}$ is a zero column vector of size $N \times 1$,
$$
M=\dfrac{1}{\Delta x^2}\left(%
\begin{array}{ccccc}
  2 & -1 &  &  & -1 \\
  -1 & 2 & -1 &  &  \\
   & \ddots & \ddots & \ddots &  \\
   &  & -1 & 2 & -1 \\
  -1 &  &  & -1 & 2 \\
\end{array}%
\right)
$$
(with $\Delta x=2/N$, $x_i=-1+i \Delta x$, $i=1,2,\ldots,N$), and
$F(U)=-\sin(U)=-(\sin u_1,\ldots, \sin u_N)T$. As in \cite{Franco2006}, we use the initial conditions

$$
U(0)=(\pi)_{i=1}^N, \qquad V(0)=\sqrt{N}\left(0.01+\sin
\left(\dfrac{2 \pi i}{N} \right) \right)_{i=1}^N,
$$
($N=64$) and integrate the problem on the interval $[0,10]$.\\
\begin{figure}[H]
\centering
\begin{tabular}{cc}
\epsfig{file=NLProb3Acc4.eps,width=0.47\linewidth,clip=}
\hspace{2mm}
\epsfig{file=NLProb3CPU4.eps,width=0.47\linewidth,clip=}
\end{tabular}
\caption{Accuracy (left) and efficiency (right) plots of 4th-order methods for Example~3.
}\label{FigProb34}
\end{figure}
\begin{figure}[H]
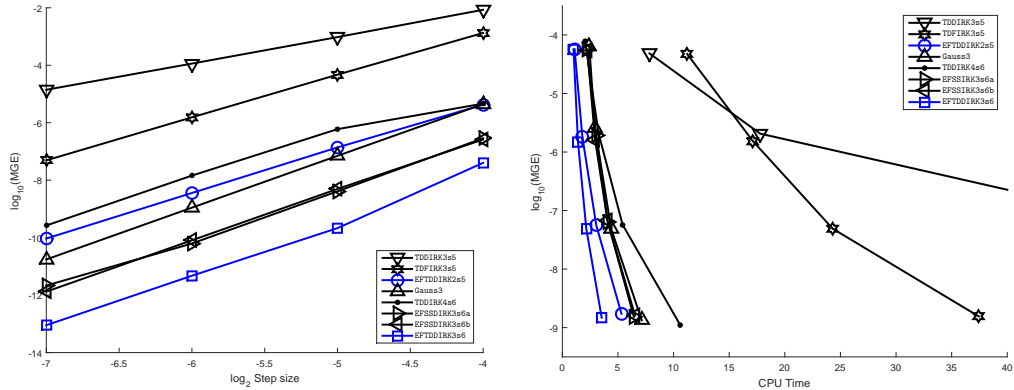

\centering
\begin{tabular}{cc}
\epsfig{file=NLProb3Acc56.eps,width=0.47\linewidth,clip=}
\hspace{2mm}
\epsfig{file=NLProb3CPU56.eps,width=0.47\linewidth,clip=}
\end{tabular}
\caption{Accuracy (left) and efficiency (right) plots of 5th and 6th-order methods for Example~3.}\label{FigProb356}
\end{figure}
Again, on the left side of Figure \ref{FigProb34} we display accuracy plots using  the same set of stepsizes $h=1/2^i$, $i=6,7,8,9$,
and the efficiency plots are shown on the right side (different time step sizes were chosen so that all the compared methods attain about the same level of accuracy). 

The numerical results show that $\mathtt{EFTDDIRK2s4(0,\tfrac{1}{2},0)}$ is the least accurate but the most efficient among the tested methods. This can be explained by the fact that its first stage is computed explicitly (since $c_1=0$).
 One can also see that the optimized fourth-order method $\mathtt{EFTDDIRK2s4(0, \tfrac{1}{2},\tfrac{3}{40})}$  performs the best overall compared to other fourth-order methods.

Next, we also investigate the spatial grid effect on this problem. For this, we compare the maximum global error obtained by all the methods for different values of $N$ (25, 50, and 100) with stepsize $h=\tfrac{1}{16}$ at time $t=10$. We present the obtained results in Tables \ref{TableSpatialGridEffect416} and \ref{TableSpatialGridEffect5616}.
From these results, we observe  that the accuracy decreases when increasing $N$. This is due to the stiffness of the problem which is increasing  for larger spatial grid  sizes $N$, which consequently affects to the global error.

\begin{table}[H]
  \centering
  \begin{tabular}{lccc}
    \hline
    Method & $N=25$ & $N=50$ & $N=100$ \\
    \hline
    $\mathtt{TDDIRK2s4}$ & $1.05 \cdot 10^{-4}$ & $1.00 \cdot 10^{-3}$ & $2.76 \cdot 10^{-2}$ \\
    $\mathtt{TDFIRK2s4}$ & $1.45 \cdot 10^{-4}$ & $1.60 \cdot 10^{-3}$ & $1.04 \cdot 10^{-2}$ \\
    $\mathtt{EFSSDIRK3s4}$ & $6.15 \cdot 10^{-5}$ & $9.18 \cdot 10^{-4}$ & $3.20 \cdot 10^{-3}$ \\
    $\mathtt{TFTDDIRK2s4}$ & $4.07 \cdot 10^{-7}$ & $1.34 \cdot 10^{-5}$ & $1.27 \cdot 10^{-5}$ \\
    $\mathtt{EFTDDIRK2s4(\tfrac{1}{4},1,0)}$ & $7.35 \cdot 10^{-7}$ & $4.29 \cdot 10^{-5}$ & $1.71 \cdot 10^{-4}$ \\
    $\mathtt{EFTDDIRK2s4(0,\tfrac{1}{2},0)}$ & $1.00 \cdot 10^{-3}$ & $7.69 \cdot 10^{-4}$ & $3.00 \cdot 10^{-3}$ \\
    $\mathtt{EFTDDIRK2s4(\tfrac{1}{4},1,\tfrac{11}{20})}$ & $1.64 \cdot 10^{-5}$ & $1.78 \cdot 10^{-4}$ & $5.58 \cdot 10^{-4}$ \\
    $\mathtt{EFTDDIRK2s4(0,\tfrac{1}{2},\tfrac{3}{40})}$ & $2.47 \cdot 10^{-7}$ & $3.09 \cdot 10^{-6}$ & $2.34 \cdot 10^{-5}$ \\
    \hline
  \end{tabular}
  \caption{Errors for 4th-order methods for different values of $N$ using $h=\tfrac{1}{16}$.}\label{TableSpatialGridEffect416}
\end{table}
\begin{table}[H]
  \centering
  \begin{tabular}{lccc}
    \hline
    Method & $N=25$ & $N=50$ & $N=100$ \\
    \hline
    $\mathtt{TDDIRK3s5}$ & $1.80 \cdot 10^{-4}$ & $1.40 \cdot 10^{-3}$ & $4.62 \cdot 10^{-2}$ \\
    $\mathtt{TDFIRK3s5}$ & $8.59 \cdot 10^{-6}$ & $8.29 \cdot 10^{-5}$ & $1.06 \cdot 10^{-2}$ \\
    $\mathtt{EFTDDIRK2s5}$ & $6.46 \cdot 10^{-8}$ & $5.83 \cdot 10^{-7}$ & $4.64 \cdot 10^{-5}$ \\
    $\mathtt{Gauss3}$ & $9.76 \cdot 10^{-8}$ & $1.92 \cdot 10^{-6}$ & $1.16 \cdot 10^{-5}$ \\
    $\mathtt{TDDIRK4s6}$ & $1.07 \cdot 10^{-6}$ & $2.21 \cdot 10^{-5}$ & $6.63 \cdot 10^{-4}$ \\
    $\mathtt{EFSSIRK3s6a}$ & $6.79 \cdot 10^{-9}$ & $4.48 \cdot 10^{-8}$ & $7.56 \cdot 10^{-7}$ \\
    $\mathtt{EFSSIRK3s6b}$ & $3.38 \cdot 10^{-9}$ & $1.07 \cdot 10^{-7}$ & $2.01 \cdot 10^{-6}$ \\
    $\mathtt{EFTDDIRK3s6}$ & $1.28 \cdot 10^{-9}$ & $6.79 \cdot 10^{-9}$ & $1.64 \cdot 10^{-7}$ \\
    \hline
  \end{tabular}
  \caption{Errors for 5th- and 6th-order methods for different values of $N$ using $h=\tfrac{1}{16}$.}\label{TableSpatialGridEffect5616}
\end{table}
\noindent \textbf{Example~4 (An ``almost" periodic orbit problem)}. Lastly, we consider the almost periodic orbit problem studied in \cite{SteifelBettis1969} given by
$$
y''+y=0.001 {\rm e}^{{\rm i}x}, \quad y(0)=1,\quad y'(0)=0.9995i,
$$
whose analytical solution is given by
$$
y(x)=(\cos x+0.0005x\sin x)+ {\rm i}(\sin x-0.0005 x\cos x),
$$
which represent a motion on a perturbation of a circular orbit in the complex plane. Clearly, for this problem $\omega=1$.

We integrate the problem on $[0, 1000]$ using all the methods listed above. Numerical results were obtained for all the methods using stepsizes $h=1/2^n$, $n=0,1,2,3$ and are illustrated in Figures \ref{FigProb44} and \ref{FigProb456}.

\begin{figure}[H]
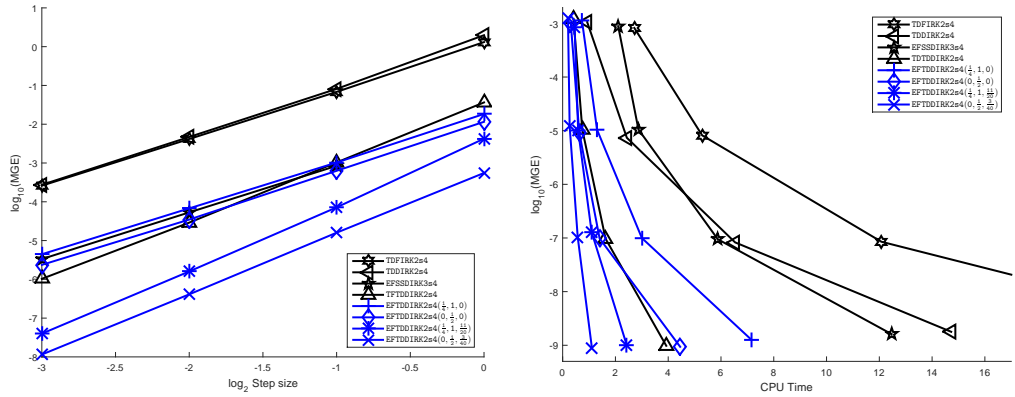

\centering
\begin{tabular}{cc}
\epsfig{file=NLProb4Acc4.eps,width=0.47\linewidth,clip=}
\hspace{2mm}
\epsfig{file=NLProb4CPU4.eps,width=0.47\linewidth,clip=}
\end{tabular}
\caption{Accuracy (left) and efficiency (right) plots of 4th-order methods for Example~4.}\label{FigProb44}
\end{figure}
\begin{figure}[H]
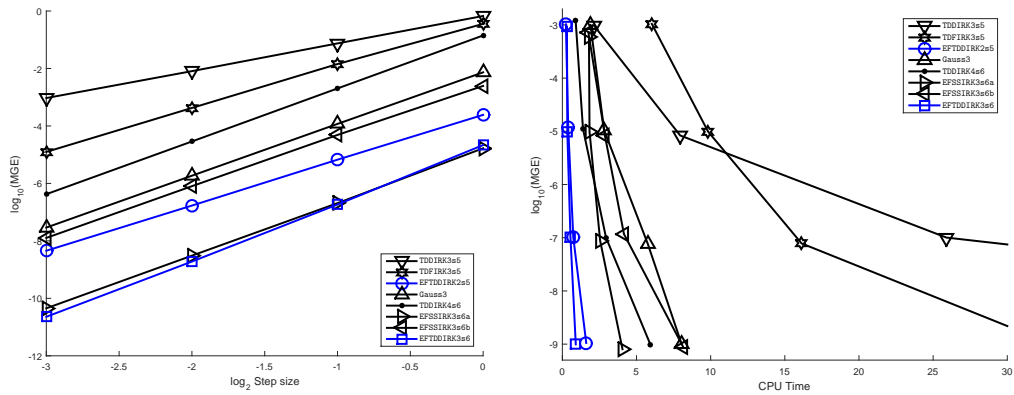

\centering
\begin{tabular}{cc}
\epsfig{file=NLProb4Acc56.eps,width=0.47\linewidth,clip=}
\hspace{2mm}
\epsfig{file=NLProb4CPU56.eps,width=0.47\linewidth,clip=}
\end{tabular}
\caption{Accuracy (left) and efficiency (right) plots of 5th and 6th-order methods for Example~4.}\label{FigProb456}
\end{figure}
In view of Figures \ref{FigProb44} and \ref{FigProb456}, the methods whose dispersion were optimized
($\mathtt{EFTDDIRK2s4(\tfrac{1}{4},1,\tfrac{11}{20})}$ and
$\mathtt{EFTDDIRK2s4(0,\tfrac{1}{2},\tfrac{3}{40})}$) clearly outperform the other
methods of order 4. Besides, we observe clearly from efficiency curves of Figures \ref{FigProb44} and \ref{FigProb456} that the newly derived methods $\mathtt{EFTDDIRK2s5}$
(order 5) and $\mathtt{EFTDDIRK3s6}$ (order 6) derived in this work are much more accurate and faster when compared to the considered existing methods. \\

\section{Conclusion}\label{conclu}

We have derived a class of exponentially fitted two-derivative diagonally implicit Runge--Kutta  (EFTDDIRK) methods for solving oscillatory differential equations.
New order and exponential fitting conditions are obtained, leading to the derivation of new methods of orders 4, 5, and 6. Stability and phase-lag analysis of these methods were investigated which resulted in optimized fourth-order schemes that are much more accurate and efficient.
Our numerical experiments have confirmed the efficiency and accuracy of  these new EFTDDIRK methods when compared to standard implicit (two-derivative) Runge--Kutta methods of the same orders.

Future works will be focusing on the existence of symmetric EFTDDIRK methods for symplectic, Hamiltonian or reversible systems. Also, we will consider the two-derivative singly diagonally implicit methods with a view to practical applications because of the easy implementation structure as discussed in \cite{Butcher2016} and \cite{KennedyCarpenter2016}.

\section*{Acknowledgements} J.O. Ehigie is grateful for the financial
support of EMS-Simons for Africa program, which enabled him undertake
some aspect of this research at Southern Methodist University, TX, USA.
V.T.~Luan gratefully acknowledges the support of the National Science Foundation under grant NSF DMS–2012022.
X. You would like to acknowledge the support of the National Natural Science Foundation of China (No. 11171155, No. 11871268) and  Natural Science Foundation of Jiangsu Province, China (No. BK20171370).

\end{document}